\providecommand{\U}[1]{\protect\rule{.1in}{.1in}}
\newtheorem{theorem}{Theorem}
\newtheorem{corollary}[theorem]{Corollary}
\newtheorem*{definition}{Definition}
\newtheorem{lemma}[theorem]{Lemma}
\newtheorem{proposition}[theorem]{Proposition}
\newtheorem{remark}[theorem]{Remark}
\newcommand{\dive}{\operatorname{div}}
\newcommand{\Capp}[1]{\operatorname{Cap}_{#1}}
\newcommand{\R}{\mathbb{R}}
\newcommand{\abs}[1]{\left|#1\right|}
\newcommand{\ps}[2]{\left\langle#1,#2\right\rangle}
\newcommand{\ton}[1]{\left(#1\right)}
\begin{document}

 \title{Stokes' theorem, volume growth and parabolicity}

 \author{Daniele Valtorta}
 \address{Dipartimento di Matematica\\
 Universit\`a degli Studi di Milano\\
 via Saldini 50\\
 I-20133 Milano, Italy, European Union.} \email{danielevaltorta@gmail.com}
 
 \author{Giona Veronelli}
 \address{Dipartimento di Matematica\\
 Universit\`a degli Studi di Milano\\
 via Saldini 50\\
 I-20133 Milano, Italy, European Union.} \email{giona.veronelli@unimi.it}
\keywords{$p$-parabolicity, Stokes' theorem, Kelvin-Nevanlinna-Royden criterion}

\begin{abstract}
We present some new Stokes' type theorems on complete non-compact manifolds that extend, in different directions, previous work by Gaffney and Karp and also the so called Kelvin-Nevanlinna-Royden criterion for $p$-parabolicity. Applications to comparison and uniqueness results involving the $p$-Laplacian are deduced.
\end{abstract}
\subjclass[2000]{31C12, 53C43}

\maketitle

\section{Introduction}
In 1954\footnotetext{This work is partially supported by GNAMPA-INdAM}, Gaffney \cite{G}, extended the famous Stokes' theorem to complete
$m$-di\-men\-sio\-nal Riemannian manifolds $M$ by proving that,
given a $C^1$ vector field $X$ on $M$, we have $\int_M\dive X=0$
provided $X\in L^1(M)$ and $\dive X\in L^1(M)$ (but in fact
$(\dive X)_-=\max\{-\dive X, 0\}\in L^1(M)$ is enough).
This result was later extended by Karp \cite{K}, who showed
that the assumption $X\in L^1(M)$ can be weakened to
\[
\liminf_{R\to+\infty}\frac{1}{R}\int_{B_{2R}\setminus B_{R}} |X| dV_M=0.
\]
Here and on, having fixed a reference origin
$o\in M$ on a non-compact manifold $M$, we set $r\left(  x\right)  =\operatorname{dist}_{M} (  x,o)  $ and we
denote by $B_{t}$ and $\partial B_{t}$ the geodesic ball and sphere of radius
$t>0$ centered at $o$. Moreover $dV_M$ is the Riemannian volume measure on $M$.

It turns out that the completeness of a manifold is analogous to the $p=\infty$ case of $p$-parabolicity, i.e., $M$ is complete if and only if it is $\infty$-parabolic. We recall the concept of $p$-parabolicity. The $p$-laplacian of a real valued function $u:M\rightarrow\mathbb{R}$ is defined by $\Delta
_{p}u=\operatorname{div} ( \left\vert \nabla u\right\vert ^{p-2}\nabla
u).$ A function $u\in W^{1,p}_{\rm{loc}}(M)$ is said to be a $p$-subsolution if
$\Delta_{p}u\geq 0$ weakly on $M$. When any bounded above $p$-subsolution is
necessarily constant we say that the manifold $M$ is $p$-parabolic.
A very useful characterization of (non-)$p$-parabolicity goes under the name of the
Kelvin-Nevanlinna-Royden criterion. In the linear setting $p=2$ it was proved
in a paper by Lyons and Sullivan \cite{LS}. See also \cite{PRS-Progress} Theorem 7.27. The non-linear extension, due to Gol'dshtein and Troyanov \cite{GT}, states that a manifold $M$ is not $p$-parabolic if and only if there exists
a vector field $X$ on $M$ such that (a) $\left\vert X\right\vert \in L^{p/(p-1)}\left(  M\right)  $, (b) $\operatorname{div}X\in L_{\text{\rm{loc}}}^{1}\left(  M\right) $ and $\left(  \operatorname{div}X\right)  _{-}\in
L^{1}\left(  M\right)  $ (in the weak sense) and (c) $0<\int_{M}\operatorname{div}X\leq+\infty$. In particular this result shows that if $M$ is $p$-parabolic and $X$ is a vector field on $M$ satisfying (a) and (b), then $\int_M\dive X =0$, thus giving a $p$-parabolic analogue of the Gaffney result. Hence, it is natural to ask whether there exists a $p$-parabolic analogue of the Karp theorem, i.e., if it is possible to weaken the assumptions on the vector field $X$ and still conclude that $\int_M \operatorname{div} X dV_M =0$.

In this paper we will present two different ways to get this
result. The first one, Theorem \ref{teo_ex}, is presented in
Section \ref{sec_evans} and relies on the existence of special
exhaustion functions. It has a more theoretical taste and gives
the desired $p$-parabolic analogue of Karp's theorem, at least in
case either $p=2$ or $p>1$ and $M$ is a model manifold. The second
one, Theorem \ref{th_gt}, is more suitable for explicit
applications. It is presented in Section \ref{volume} and avoids
the parabolicity assumption on $M$ by  requiring some connections
between the $q$-norm of $X$, $q=p/(p-1)$, and the volume growth of
geodesic balls in $M$. In some sense, specified in Remark \ref{cutoff}, this result is optimal. In Section \ref{appl} we use these techniques to generalize some
results involving the $p$-laplacian comparison and uniqueness theorems on the $p$-harmonic representative in a homotopy class. In particular, in
Theorem \ref{th_rn}, we extend a $p$-laplacian comparison for
vector valued maps on $p$-parabolic manifolds recently obtained in \cite{HPV}. We point out that the new proof admits $C^0\cap W^{1,p}_{\rm{loc}}$ maps, instead of the
smooth maps of \cite{HPV}. This is a relevant
improvement since, as opposed to the linear setting where
$2$-harmonic maps are necessarily smooth, for $p\neq 2$ one can at
most ensure $p$-harmonic maps to be $C^{1,\alpha}$
\cite{To},\cite{HL},\cite{WY}. Hence, $C^{1,\alpha}$ seems to be the
natural class of functions to consider when dealing with
the $p$-laplacian.

\section*{Acknowledgments}
We specially thank professors Stefano Pigola, Alberto Setti and Ilkka Holopainen for the useful hints and conversations that helped shape this article. Moreover we would like to thank professor Frank Morgan for suggesting some improvements to the first version of this paper.

\section{Exhaustion functions and parabolicity}\label{sec_evans}

Given a continuous exhaustion function $f:M\to \R^+$ in $W^{1,p}_{\rm{loc}}(M)$, set 
\[
C(r)=f^{-1}[0,2r)\setminus f^{-1}[0,r).
\] 
\begin{definition}
We say that a vector field $X\in L^{q}_{\rm{loc}}(M)$ (with $\frac 1 p + \frac 1 q = 1$) satisfies the condition $\mathcal{E}_{M,p}$ if:
\begin{equation*}
 \liminf_{r\to \infty}\frac 1 r \abs{\int_{C(r)} \abs{\nabla{f}}^pdV_M}^{1/p}\abs{\int_{C(r)}\abs X ^{p/(p-1)} dV_M}^{(p-1)/p}=0 .
\end{equation*}
\end{definition}

\begin{theorem}\label{teo_ex}
 Let $f:M\to \R^+$ be a continuous exhaustion function in $W^{1,p}_{\rm{loc}}(M)$. If $X$ is a $L^q_{\rm{loc}}(M)$ vector field with $(\dive X)_-\in L^1(M)$, $\dive(X)\in L^1_{\rm{loc}}(M)$ in the weak sense and $X$ satisfies the condition $\mathcal{E}_{M,p}$, then $\int_M \dive(X) dV_M=0$.
\begin{proof}
Note that $(\dive X)_-\in L^1(M)$ and $\dive(X)\in L^1_{\rm{loc}}(M)$ in the weak sense is the most general hypothesis under which $\int_M \dive X dV$ is well-defined (possibly infinite). For $r>0$, consider the $W^{1,p}$ functions defined by
\begin{equation*}
 f_r(x) := \max\{\min\{2r-f(x),r\},0\},
\end{equation*}
i.e., $f_r$ is a function identically equal to $r$ on $D(r):= f^{-1}[0,r)$, with the support in $D(2r)$ and such that $\nabla {f_r}=-\chi_{C(r)}\nabla f $, where $\chi_{C(r)}$ is the characteristic function of $C(r)$. By dominated and monotone convergence we can write
\begin{equation*}
 \int_M \dive X dV_M=\lim_{r\to \infty}\frac 1 r \int_{D(2r)} f_r \dive X dV_M
\end{equation*}
Since $f$ is an exhaustion function, $f_r$ has compact support, by definition of weak divergence we get
\begin{align*}
\int_M \dive X dV_M &=\lim_{r\to\infty} \frac 1 r\int_{D(2r)}\ps{\nabla f_r}{X} dV_M\\ &\leq\liminf_{r\to \infty} \frac 1 r\ton{\int_{C(r)} \abs{\nabla f}^p dV_M}^{1/p}\ton{\int_{C(r)} \abs{X}^q dV_M}^{1/q}=0.
\end{align*}
This proves that $(\dive X)_+:=\dive X+(\dive X)_-\in L^1(M)$ and by exchanging $X$ with $-X$, the claim follows.
\end{proof}
\end{theorem}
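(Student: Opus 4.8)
The plan is to recover $\int_M \dive X\, dV_M$ as a limit of integrals of $\dive X$ against compactly supported Lipschitz cutoffs built from the exhaustion function $f$, to integrate by parts via the weak definition of the divergence, and then to control the resulting term with Hölder's inequality and the hypothesis $\mathcal{E}_{M,p}$. Concretely I would set
\[
f_r(x) = \max\{\min\{2r - f(x), r\}, 0\},
\]
so that $f_r\equiv r$ on $D(r)=f^{-1}[0,r)$, $f_r$ is supported in $D(2r)$, and $\nabla f_r=-\chi_{C(r)}\nabla f$ is supported only on the annular region $C(r)$. Since $f$ is an exhaustion function each $f_r$ is compactly supported, hence an admissible test function for the weakly defined $\dive X\in L^1_{\rm{loc}}(M)$.

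First I would check that $\int_M\dive X\,dV_M$ is well defined in $(-\infty,+\infty]$: writing $\dive X=(\dive X)_+-(\dive X)_-$, the assumption $(\dive X)_-\in L^1(M)$ makes the integral meaningful. Then I would establish
\[
\int_M\dive X\,dV_M=\lim_{r\to\infty}\frac1r\int_{D(2r)}f_r\,\dive X\,dV_M .
\]
The key observation here is that $r^{-1}f_r$ increases monotonically to $1$ pointwise on $M$ as $r\to\infty$ (it equals $1$ on $D(r)$ and interpolates in $C(r)$), so monotone convergence handles $(\dive X)_+$ even when its integral is $+\infty$, while dominated convergence, using $(\dive X)_-\in L^1(M)$ and $0\le r^{-1}f_r\le 1$, handles $(\dive X)_-$.

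Next I would test the weak divergence against $f_r$ and use $\nabla f_r=-\chi_{C(r)}\nabla f$ to rewrite the right-hand side as an integral of $\ps{\nabla f}{X}$ over $C(r)$, estimated by Hölder's inequality with conjugate exponents $p$ and $q$:
\[
\frac1r\abs{\int_{D(2r)}\ps{\nabla f_r}{X}dV_M}\le \frac1r\ton{\int_{C(r)}\abs{\nabla f}^p dV_M}^{1/p}\ton{\int_{C(r)}\abs{X}^q dV_M}^{1/q}.
\]
Passing to the $\liminf$ as $r\to\infty$ and invoking $\mathcal{E}_{M,p}$, the right-hand side vanishes along a suitable sequence $r_k\to\infty$, which forces $\int_M\dive X\,dV_M\le 0$.

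The step I expect to require the most care is upgrading this inequality to an equality. From $\int_M\dive X\,dV_M\le 0$ together with $(\dive X)_-\in L^1(M)$ one gets $\int_M(\dive X)_+\,dV_M=\int_M\dive X\,dV_M+\int_M(\dive X)_-\,dV_M<+\infty$, so in fact $\dive X\in L^1(M)$. This integrability is precisely what allows the whole argument to be rerun with $X$ replaced by $-X$: the condition $\mathcal{E}_{M,p}$ is insensitive to the sign of $X$, and $(\dive(-X))_-=(\dive X)_+\in L^1(M)$, so the same estimate yields $\int_M\dive(-X)\,dV_M\le 0$, i.e. $\int_M\dive X\,dV_M\ge 0$. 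The two inequalities combine to give $\int_M\dive X\,dV_M=0$.
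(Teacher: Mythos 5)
Your proposal is correct and follows essentially the same route as the paper: the same cutoffs $f_r$, the same monotone/dominated convergence step to recover $\int_M\dive X$, the same H\"older estimate over $C(r)$ combined with $\mathcal{E}_{M,p}$, and the same final symmetrization replacing $X$ by $-X$ after deducing $(\dive X)_+\in L^1(M)$. The only difference is that you spell out more carefully why the $X\mapsto -X$ swap is legitimate, which the paper leaves implicit.
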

Note that setting $p=\infty$ and $f(x)=r(x)$, one gets exactly the statement of Karp \cite{K}.

\begin{remark}
\rm{From the proof of Theorem \ref{teo_ex}, it is easy to see that condition $\mathcal{E}_{M,p}$ can be generalized a little. In fact, if there is a function (with no regularity assumptions) $g:(0,\infty)\to(0,\infty)$ such that $g(t)>t$ and
\begin{equation*}
\liminf_{r\to \infty}\frac 1 {g(r)-r} \abs{\int_{G(r)} \abs{\nabla{f}}^pdV_M}^{1/p}\abs{\int_{G(r)}\abs X ^q dV_M}^{1/q}=0,
\end{equation*}
where $G(r)\equiv f^{-1}[0,g(r))\setminus f^{-1}[0,r]$, the conclusion of Theorem \ref{teo_ex} is still valid with the same proof, only needlessly complicated by an awkward notation.}
\end{remark}

The smaller the value of $\int_{C(r)} \abs{\nabla f}^p dV_M$ is, the more powerful the conclusion of the theorem is, and since $p$-harmonic functions are in some sense minimizers of the $p$-Dirichlet integral, it is natural to look for such functions as candidates for the role of $f$. Of course, if $M$ is $p$-parabolic it does not admit any positive nonconstant $p$-harmonic function defined on all $M$. Anyway, since we are interested only in the behaviour at infinity of functions and vector fields involved (i.e. the behaviour in $C(r)$ for $r$ large enough), it would be enough to have a $p$-harmonic function $f$ which is defined outside a compact set (inside it could be given any value without changing the conclusions of the theorem).

For example, L. Sario and M. Nakai proved that for every $2$-parabolic surface $M$, and every relatively compact set $\Omega$, there exists an Evans' potential (see \cite[theorems 12F and 12G]{SN}), i.e., a positive harmonic exhaustion function $E:M\setminus \Omega \to \R^+$ with $E|_{\partial \Omega}=0$ and such that for any $c>0$
\begin{equation*}
 \int_{\{E(x)\leq c\}} \abs{\nabla E}^2 dV_M \leq c.
\end{equation*}
Putting this into inequality $\mathcal{E}_{M,p}$ (with $p=2$) we can conclude that on a $2$-parabolic surface, a vector field $X\in L^2_{\rm{loc}}(M)$ with $(\dive X)_-\in L^1(M)$ and $\dive(X)\in L^1_{\rm{loc}}(M)$ has zero divergence over $M$ provided
\begin{equation*}
 \lim_{r\to \infty} \frac{\int_{\{r\leq E(x)\leq 2r\}}\abs X ^2 dV_M} r =0,
\end{equation*}
thus giving a result very similar (at least formally) to Karp's, except for the different exponents and for the presence of the Evans potential $E(\cdot)$ that plays the role of the geodesic distance $r(\cdot)$.

It can be proved that an Evans potential exists not only on ($2$-)parabolic surfaces but also on ($2$-)parabolic manifolds (see \cite{V} for a complete proof). Unfortunately, no similar existence results have been proved yet in the generic non-linear case ($p\neq 2$). Moreover, even for $p=2$, in general there is no explicit characterization of the function $E$ which can help to estimate its level sets, and therefore the quantity $\int_{\{r\leq E(x)\leq 2r\}}\abs X ^q dV_M$. A very special case is given by the model manifolds (in the sense of Greene and Wu \cite{GW}). In this setting the radial function $f:M\setminus B_1\to\mathbb R$ defined as
\begin{gather}\label{func_f}
 f(x):= \int_1^{r(x)} \frac 1 {A (\partial B_s)^{1/(p-1)}} ds
\end{gather}
is $p$-harmonic and it holds
\begin{equation}\label{grad}
\int_{B_{r_2}\setminus B_{r_1}}|\nabla f|^p = \int_{r_1}^{r_2} \frac 1 {A (\partial B_s)^{1/(p-1)}} ds = f(r_2)-f(r_1)\qquad \ \ for r_2>r_1>1.
\end{equation}
Here and in what follows $A\left(  \partial B_{t}\right)  $ stands for the $\left(  m-1\right)  $-dimensional Hausdorff measure of $\partial B_{t}$ and is a.e.\ continuous as a function of $t$. Moreover the model manifold $M$ is $p$-parabolic if and only if $f(\infty)=\infty$ (see \cite{G2} and \cite{T}) and hence in this case $f$ is the Evans' potential we looked for.

\section{Stokes' Theorem under volume growth assumptions}\label{volume}

We now return to consider manifolds $M$ which are not necessarily spherically symmetric. In this general situation, the function $f(r)$ defined in (\ref{func_f}) is not $p$-harmonic. Moreover, since $f(+\infty)=+\infty$ is not in general a necessary condition for $p$-parabolicity (though it is sufficient, see \cite{T}, \cite{RS}, \cite{Holo-Cont}, and Remark \ref{area} below) , we are not ensured that $f$ is an exhaustion function even for $p$-parabolic $M$. However $f$ is still well defined and relation \eqref{grad} still holds. Hence, we are led to generalize Theorem \ref{teo_ex} to generic manifolds, i.e. without parabolicity assumptions, and the generalization we obtain is optimal in some sense; see Theorem \ref{th_gt} and Remark \ref{cutoff} below. Obviously, in this new result the conclusion will depend on the volume growth of geodesics ball of $M$. Key tools are the estimates from above of the capacity of the condenser $(\bar B_{r_1},B_{r_2})$ with surface and volume comparisons, as shown in the next proposition. Before that, we briefly recall the definition of $p$-capacity.

Given a Riemannian manifold $M$, let $\Omega$ be a connected domain in $M$ and $D\subset\Omega$ a compact set. For $p\geq 1$, the $p$-capacity of $D$ in $\Omega$ is defined by
\[
\Capp p(D,\Omega):=\inf\left\{\int_{\Omega} |du|^p\ \vert\ u\in W^{1,p}_0(\Omega) \cap C_0^0(\Omega), u\geq1\textrm{ on }D\right\}.
\]
It is well known that a manifold $M$ is $p$-parabolic if and only if $\Capp p(D,M)=0$ for every compact set $D\subset M$ (or equivalently for a compact set $D$ with nonvoid interior part),\cite{Holo-Dissertation}. For $p>1$, we define the functions $a_p(t),b_p^{(r_1)}:(0,+\infty)\to(0,+\infty)$ as
\[
a_p(t):= A(\partial B_t)^{-1/(p-1)} \ \ \ b_p^{(r_1)}(t):=\ton{\frac{s-r_1}{V(t)-V(r_1)}}^{1/(p-1)}
\]
Since the volume $V(B_t)$ of a geodesic ball seen as a function of $t$ is continuous and differentiable almost everywhere with $V'(B_t)=A(\partial B_t)$ (see for example \cite{Ch} proposition III.3.2), both functions are a.e. continuous in $(0,+\infty)$, and $b_p$ is also differentiable almost everywhere.

In \cite[pag. 3]{G3}, A. Grigor'yan proves the following inequalities for the $p$-capacity of a spherical condenser, inequalities that link the $p$-parabolicity of a manifold to the area and volume growth of its geodesic balls.
\begin{proposition}\label{prop_cfrcap}
 Given a complete Riemannian manifold $M$, the capacity of the condenser $(B_{r_1},B_{r_2})$ is bounded from above by
\begin{equation}\label{eq_cap_s}
 \operatorname{Cap}_p(\bar B_{r_1},B_{r_2})\leq \ton{\int_{r_1}^{r_2} a_p(t) dt}^{1-p},
\end{equation}
\begin{equation}\label{eq_cap_v}
 \operatorname{Cap}_p(\bar B_{r_1},B_{r_2})\leq 2^p\ton{\int_{r_1}^{r_2} b_p^{(r_1)}(t) dt}^{1-p},
\end{equation}
\end{proposition}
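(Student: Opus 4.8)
The plan is to exhibit, for each of the two inequalities, an explicit admissible test function $u$ in the definition of $\Capp{p}(\bar B_{r_1}, B_{r_2})$ and to estimate $\int_{\Omega}|du|^p$ for that particular choice; since the capacity is an infimum, any admissible competitor yields an upper bound. Both candidates will be radial, i.e. functions of $r(x)$ alone, since the condenser is a spherical shell and a radial profile lets us integrate layer by layer using the coarea formula.

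For the surface inequality \eqref{eq_cap_s}, I would set $u(x) = \varphi(r(x))$ where $\varphi$ decreases from $1$ at $r_1$ to $0$ at $r_2$, and optimize over the radial profile $\varphi$. Writing the Dirichlet energy via the coarea formula as $\int_{r_1}^{r_2}|\varphi'(t)|^p A(\partial B_t)\,dt$ (using $|\nabla u| = |\varphi'(r)|$ a.e., since $|\nabla r| = 1$ a.e.), I would minimize this one-dimensional functional subject to $\int_{r_1}^{r_2}\varphi'(t)\,dt = -1$. The Euler--Lagrange / Hölder computation gives the optimal $\varphi'(t)$ proportional to $A(\partial B_t)^{-1/(p-1)} = a_p(t)$, and substituting back produces exactly $\ton{\int_{r_1}^{r_2} a_p(t)\,dt}^{1-p}$. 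One must check that this $u$ lies in $W^{1,p}_0 \cap C^0_0$ of a slightly larger domain and is $\geq 1$ on $\bar B_{r_1}$, which is routine once $\varphi$ is taken continuous, equal to $1$ for $t \leq r_1$ and $0$ for $t \geq r_2$.

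For the volume inequality \eqref{eq_cap_v}, the difficulty is that one no longer controls $A(\partial B_t)$ but only the volume $V(t)$, so the same pointwise minimization is unavailable. Here I would instead build a test function adapted to volume, essentially taking $u$ to be a function of the normalized volume coordinate, or equivalently choosing a profile whose energy is estimated through the relation $V'(t) = A(\partial B_t)$ together with Hölder's inequality applied in the variable $t$. Concretely, one bounds $\int_{r_1}^{r_2}|\varphi'|^p A(\partial B_t)\,dt$ from above by pairing $|\varphi'|^p$ against $dV = A(\partial B_t)\,dt$ and invoking Hölder to convert the constraint $\int \varphi' = -1$ into the integral $\int_{r_1}^{r_2}\ton{\tfrac{t-r_1}{V(t)-V(r_1)}}^{1/(p-1)}dt = \int_{r_1}^{r_2} b_p^{(r_1)}(t)\,dt$. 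The extra factor $2^p$ arises from handling the interplay between the weight $(t-r_1)$ and the volume increment, and I expect this bookkeeping — choosing the right profile and tracking the constant — to be the main obstacle; the surface case by comparison is a clean one-dimensional variational problem.

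Finally, I note that since Grigor'yan's argument is being cited, the cleanest route may simply be to reproduce his two test-function constructions directly rather than re-derive the variational optimum from scratch; in either case the essential point is that both bounds follow from evaluating the capacity functional on a single well-chosen radial competitor.
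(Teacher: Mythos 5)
Your overall strategy --- exhibit an explicit radial competitor and evaluate the capacity functional on it --- is the right one, and for \eqref{eq_cap_s} your proposal is complete and correct: the optimal radial profile with $\varphi'\propto a_p$ is exactly the function $\varphi_{r_1,r_2}$ of \eqref{a1}--\eqref{a2} that the paper constructs in the proof of Proposition \ref{weak_KNR}, where the identity $\int_M|\nabla\varphi_{r_1,r_2}|^p\,dV_M=\bigl(\int_{r_1}^{r_2}a_p(s)\,ds\bigr)^{1-p}$ is computed. (Note that the paper itself gives no proof of Proposition \ref{prop_cfrcap}; it quotes it from Grigor'yan \cite{G3}, so the only material to compare against is that later construction.)

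For \eqref{eq_cap_v}, however, there is a genuine gap. You never produce a test function, and the claim that H\"older ``converts the constraint $\int\varphi'=-1$ into the integral $\int_{r_1}^{r_2}b_p^{(r_1)}(t)\,dt$'' is not an argument: pairing $|\varphi'|^p$ against $A(\partial B_t)\,dt$ via H\"older reproduces the surface bound, because the dual weight that comes out is $A^{-1/(p-1)}=a_p$, not $b_p^{(r_1)}$. The missing idea is this: take the analogous profile with $\varphi'\propto b_p^{(r_1)}$, so that the energy equals $c^{-p}\int_{r_1}^{r_2}\bigl(b_p^{(r_1)}(t)\bigr)^p A(\partial B_t)\,dt$ with $c=\int_{r_1}^{r_2}b_p^{(r_1)}(t)\,dt$, and then control the remaining integral by writing $A=V'$ and integrating by parts: setting $W(t)=V(t)-V(r_1)$, one has $W^{-p/(p-1)}W'=-(p-1)\,\frac{d}{dt}W^{-1/(p-1)}$, the boundary term at $r_2$ is nonpositive and the one at $r_1$ vanishes, and what remains is $p\int_{r_1}^{r_2}\bigl(\frac{t-r_1}{W(t)}\bigr)^{1/(p-1)}dt=p\,c$. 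This yields $\operatorname{Cap}_p(\bar B_{r_1},B_{r_2})\le p\,c^{1-p}$, which is the sharper constant mentioned in the paper's remark and in particular implies \eqref{eq_cap_v} since $p<2^p$. Without this step (or Grigor'yan's original construction producing the constant $2^p$), the second inequality is not proved; the ``bookkeeping'' you defer is precisely where the idea lives.
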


\begin{remark}\label{area}
\rm{ We observed before that $a_p\notin L^1(0,+\infty)$ implies $M$ is $p$-parabolic. This is easily obtained by letting $r_2$ go to infinity in \eqref{eq_cap_s}.}
\end{remark}
\begin{remark}
\rm{ In \cite{G2}, the author proves similar inequalities in the case $p=2$, but with a different proof. This proof can be easily adapted to obtain a better constant in inequality \eqref{eq_cap_v}, in fact $2^p$ can be replaced by $p$.}
\end{remark}

The functions $a_p$ and $b_p$ can be used to construct special cutoff functions with controlled $p$-Dirichlet integral. Using these cutoffs, with an argument similar to the one we used in the proof of Theorem \ref{teo_ex}, we get a more suitable and manageable condition on a vector field $X$ in order to guarantee that $\int_M \dive(X)dV_M=0$.

\begin{definition}
We say that a real function $f:M\to\mathbb R$ satisfies the condition $\mathcal{A}_{M,p}$ on $M$ for some $p>1$ if there exists a function $g:(0,+\infty)\to(0,+\infty)$ such that
\begin{equation}\label{cond_vol}
\liminf_{R\to\infty}\left(\int_{B_{(R+g(R))}\setminus B_R}fdV_M\right)\left(\int_R^{R+g(G)} a_p(s) ds\right)^{-1}=0.
\end{equation}
\end{definition}

The next result gives the annunced generalization under volume growth assumption of the Kelvin-Nevanlinna-Royden criterion.

\begin{theorem}\label{th_gt}
Let $(M,\left\langle ,\right\rangle)$ be a non-compact Riemannian manifold. Let $X$ be a vector field on $M$ such that
\begin{equation}\label{liploc}
\operatorname{div}X\in L_{\text{\rm{loc}}}^{1}\left(  M\right)\ \mathrm{and}\ \max\left(
-\operatorname{div}X,0\right)  =\left(  \operatorname{div}X\right)  _{-}\in
L^{1}\left(  M\right).
\end{equation}
If $|X|^{p/(p-1)}$ satisfies the condition $\mathcal{A}_{M,p}$ on $M$, then
\[
\int_M\operatorname{div}X dV_M=0.
\]
\end{theorem}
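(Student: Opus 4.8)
The plan is to run the argument of Theorem \ref{teo_ex}, but with the abstract cutoff $f_r$ replaced by the radial function whose $p$-Dirichlet integral realizes Grigor'yan's surface bound \eqref{eq_cap_s}. Since $(\operatorname{div} X)_-\in L^1(M)$ and $\operatorname{div} X\in L^1_{\rm loc}(M)$, the integral $\int_M\operatorname{div} X\,dV_M$ is well defined in $(-\infty,+\infty]$, and it suffices to prove that $(\operatorname{div} X)_+\in L^1(M)$ and $\int_M\operatorname{div} X\,dV_M\le 0$. Indeed, once this is known, the field $-X$ again satisfies the hypotheses of the theorem, because $\abs{-X}^q=\abs{X}^q$ still satisfies $\mathcal{A}_{M,p}$ and $(\operatorname{div}(-X))_-=(\operatorname{div} X)_+\in L^1(M)$; applying the inequality just obtained to $-X$ gives $\int_M\operatorname{div} X\,dV_M\ge 0$, hence equality.

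For the main estimate, fix $R>0$, let $g$ be the function provided by $\mathcal{A}_{M,p}$, write $A_R:=B_{R+g(R)}\setminus B_R$ and $C_R:=\int_R^{R+g(R)}a_p(s)\,ds$, and define the radial Lipschitz cutoff $\phi_R(x)=\Phi(r(x))$ with $\Phi\equiv 1$ on $[0,R]$, $\Phi\equiv 0$ on $[R+g(R),+\infty)$ and $\Phi(t)=C_R^{-1}\int_t^{R+g(R)}a_p(s)\,ds$ for $t\in[R,R+g(R)]$. Then $\phi_R$ is compactly supported and $\nabla\phi_R=-C_R^{-1}a_p(r(x))\,\chi_{A_R}\,\nabla r$. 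Using the coarea formula together with the identity $a_p(t)^p A(\partial B_t)=A(\partial B_t)^{-1/(p-1)}=a_p(t)$, the $p$-Dirichlet integral of $\phi_R$ collapses:
\begin{equation*}
\int_M\abs{\nabla\phi_R}^p\,dV_M=C_R^{-p}\int_R^{R+g(R)}a_p(t)^p A(\partial B_t)\,dt=C_R^{-p}\int_R^{R+g(R)}a_p(t)\,dt=C_R^{1-p}.
\end{equation*}

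Next I would extend the definition of weak divergence to the Lipschitz test function $\phi_R$ by mollification; this is legitimate since $\nabla\phi_R$ is supported in $A_R$, where $X\in L^q$ along the sequence furnished by $\mathcal{A}_{M,p}$, so that $\ps{\nabla\phi_R}{X}\in L^1(M)$. Hölder's inequality and the previous display, together with $\tfrac{1-p}{p}=-\tfrac1q$, then give
\begin{equation*}
\int_M\phi_R\operatorname{div} X\,dV_M=-\int_M\ps{\nabla\phi_R}{X}\,dV_M\le\ton{\int_M\abs{\nabla\phi_R}^p\,dV_M}^{1/p}\ton{\int_{A_R}\abs{X}^q\,dV_M}^{1/q}=\ton{\frac{\int_{A_R}\abs{X}^q\,dV_M}{\int_R^{R+g(R)}a_p(s)\,ds}}^{1/q}.
\end{equation*}
By the condition $\mathcal{A}_{M,p}$ for $\abs{X}^q$, the right-hand side tends to $0$ along a sequence $R_n\to+\infty$.

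It remains to pass to the limit along $R_n$. Since $0\le\phi_{R_n}\le 1$ and $\phi_{R_n}\to 1$ pointwise, dominated convergence yields $\int_M\phi_{R_n}(\operatorname{div} X)_-\,dV_M\to\int_M(\operatorname{div} X)_-\,dV_M<+\infty$, while Fatou's lemma gives $\liminf_n\int_M\phi_{R_n}(\operatorname{div} X)_+\,dV_M\ge\int_M(\operatorname{div} X)_+\,dV_M$. Writing $\int_M\phi_{R_n}\operatorname{div} X=\int_M\phi_{R_n}(\operatorname{div} X)_+-\int_M\phi_{R_n}(\operatorname{div} X)_-$ and combining with the bound above (whose right-hand side vanishes) forces $\int_M(\operatorname{div} X)_+\,dV_M\le\int_M(\operatorname{div} X)_-\,dV_M<+\infty$; thus $(\operatorname{div} X)_+\in L^1(M)$ and $\int_M\operatorname{div} X\,dV_M\le 0$, after which the symmetry step concludes. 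I expect the main obstacle to lie not in the algebra but in the non-smoothness of $r$: justifying $\abs{\nabla r}=1$ a.e., the coarea formula on the level sets of $r$, and the admissibility of $\phi_R$ as a test function all rest on the a.e.\ differentiability of $t\mapsto V(B_t)$ with $V'=A(\partial B_t)$ recalled before Proposition \ref{prop_cfrcap}, together with a mild check that $C_R\in(0,+\infty)$ along the chosen sequence.
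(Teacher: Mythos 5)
Your argument is correct and is essentially the paper's own proof: the paper routes Theorem \ref{th_gt} through Proposition \ref{weak_KNR} (applied with $f=\operatorname{div}X$), whose proof uses exactly your cutoff built from $a_p$, the same computation $\int_M|\nabla\varphi|^p=\bigl(\int_{r_1}^{r_2}a_p(s)ds\bigr)^{1-p}$, H\"older along a sequence realizing the $\liminf$ in $\mathcal{A}_{M,p}$, and the same symmetry step with $-X$. The only cosmetic differences are that you use Fatou for $(\operatorname{div}X)_+$ where the paper extracts a subsequence making the cutoffs monotone, and that $\phi_R$ is in general only in $W^{1,p}_0$ rather than Lipschitz (since $a_p$ need not be bounded), a point your density/mollification remark already covers.
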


Accordingly, if $X$ is a vector field on $M$ such that $\abs X ^{p/(p-1)}$ satisfies the condition $\mathcal{A}_{M,p}$, $\operatorname{div}X\in L_{\text{\rm{loc}}}^{1}\left(  M\right)  $, and $\operatorname{div}X\geq0$ on $M$, then we must necessarily conclude that
$\operatorname{div}X=0$ on $M$. As a matter of fact, even if $\operatorname{div}X\notin L_{\text{\rm{loc}}}^{1}\left(  M\right)  $, we can obtain a similar conclusion as shown in the next proposition, inspired by \cite[Proposition 9]{HPV}.

\begin{proposition}\label{weak_KNR}
Let $(M,\left\langle ,\right\rangle)$ be a non-compact Riemannian manifold. Let $X$ be a vector field on $M$ such that
\begin{equation}\label{div_deb}
\operatorname{div}X\geq f
\end{equation}
in the sense of distributions, for some $f\in L^1_{\text{\rm{loc}}}(M)$ with $f_-\in L^1(M)$. If $|X|^{p/(p-1)}$ satisfies condition $\mathcal{A}_{M,p}$ on $M$ for some $p>1$, then
\begin{equation}\label{int_f}
\int_M f \leq 0.
\end{equation}
\end{proposition}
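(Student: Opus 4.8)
The plan is to mimic the proof of Theorem~\ref{th_gt}, testing the distributional inequality \eqref{div_deb} against the near-optimal radial cutoffs that realize the capacity bound \eqref{eq_cap_s}. Let $q=p/(p-1)$, let $g$ be the function furnished by the condition $\mathcal{A}_{M,p}$ applied to $\abs{X}^{q}$, and write $C_R=B_{R+g(R)}\setminus B_R$. For each $R>0$ I would take the radial Lipschitz function $\phi_R$ equal to $1$ on $B_R$, equal to $0$ outside $B_{R+g(R)}$, and interpolating in the annulus $C_R$ through the normalized primitive $t\mapsto \ton{\int_{t}^{R+g(R)}a_p}\ton{\int_R^{R+g(R)}a_p}^{-1}$ of $a_p$, exactly the test function behind Proposition~\ref{prop_cfrcap}. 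By the coarea formula and the pointwise identity $a_p(t)^p A(\partial B_t)=a_p(t)$, its gradient is supported in $C_R$ and
\[
\int_M\abs{\nabla\phi_R}^p\,dV_M=\ton{\int_R^{R+g(R)}a_p(t)\,dt}^{1-p},
\]
so that (by completeness, which makes geodesic balls precompact) $\phi_R$ is a compactly supported, nonnegative $W^{1,p}(M)$ cutoff realizing the bound \eqref{eq_cap_s}.

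Next I would feed $\phi_R$ into \eqref{div_deb}. Since the inequality holds in the sense of distributions and $\phi_R\ge0$ is compactly supported Lipschitz, a routine mollification (using the local integrability of $X$ implicit in $\mathcal{A}_{M,p}$) gives
\[
\int_M f\,\phi_R\,dV_M\leq-\int_M\ps{X}{\nabla\phi_R}\,dV_M\leq\int_{C_R}\abs{X}\,\abs{\nabla\phi_R}\,dV_M .
\]
H\"older's inequality with exponents $p,q$ and the gradient identity above then yield
\[
\int_M f\,\phi_R\,dV_M\leq\ton{\int_{C_R}\abs{\nabla\phi_R}^p}^{1/p}\ton{\int_{C_R}\abs{X}^{q}}^{1/q}=\left[\ton{\int_R^{R+g(R)}a_p(t)\,dt}^{-1}\int_{C_R}\abs{X}^{q}\,dV_M\right]^{1/q}.
\]
By the very definition of condition $\mathcal{A}_{M,p}$ for $\abs{X}^{q}$ (with this $g$), the bracketed quantity has $\liminf$ equal to $0$, so there is a sequence $R_j\to\infty$ along which the right-hand side tends to $0$.

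The delicate final step, which I expect to be the main obstacle, is passing to the limit on the left: since $f$ is \emph{not} assumed integrable (only $f_-\in L^1(M)$), one cannot simply let $R\to\infty$ in $\int_M f\phi_R$. Instead I would split $f=f_+-f_-$. As $0\le\phi_{R}\le1$ and $\phi_{R}\to1$ pointwise, dominated convergence gives $\int_M\phi_{R_j}f_-\to\int_M f_-<\infty$, so the previous step forces
\[
\int_M\phi_{R_j}f_+\,dV_M\leq\int_M\phi_{R_j}f_-\,dV_M+o(1)\longrightarrow\int_M f_-\,dV_M<\infty .
\]
Fatou's lemma applied along $R_j$ then yields $\int_M f_+\le\liminf_j\int_M\phi_{R_j}f_+\le\int_M f_-<\infty$; in particular $f_+\in L^1(M)$, whence $f\in L^1(M)$ and $\int_M f\,dV_M=\int_M f_+-\int_M f_-\le0$, which is \eqref{int_f}. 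The crux is thus using the finiteness of $\int_M f_-$ together with the $\mathcal{A}_{M,p}$ bound to bootstrap the integrability of $f_+$ via Fatou, rather than assuming $\int_M f$ well-defined from the start.
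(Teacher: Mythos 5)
Your proposal is correct and follows essentially the same route as the paper's proof: the same capacity-realizing radial cutoffs $\varphi_{R,R+g(R)}$ built from the normalized primitive of $a_p$, the same H\"older estimate reducing everything to condition $\mathcal{A}_{M,p}$ for $\abs{X}^{p/(p-1)}$, and the same splitting $f=f_+-f_-$ at the end. The only (immaterial) difference is that you handle the left-hand limit via Fatou's lemma along the minimizing sequence, whereas the paper first passes to a subsequence with $R_{k+1}\geq R_k+g(R_k)$ so that the cutoffs increase monotonically and then applies monotone convergence to $f_+$.
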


\begin{remark}\label{parabolic_weak_KNR}\rm{
Combining the following proof with the proof of \cite[Proposition 9 and
Remark 10]{HPV}, one obtains the validity of \eqref{int_f}
when $M$ is $p$-parabolic and $|X|^{p/(p-1)}\in L^1(M)$
instead of satisfying $\mathcal{A}_{M,p}$.
}\end{remark}

\begin{proof}
Fix $r_2>r_1>0$ to be chosen later. Define the functions $\hat\varphi=\hat\varphi_{r_1,r_2}:B_{r_2}\setminus B_{r_1}\to\mathbb R$ as
\begin{equation}\label{a1}
\hat\varphi(x):=\left(\int_{r_1}^{r_2}a_p(s)ds\right)^{-1}\int_{r(x)}^{r_2}a_p(s)ds
\end{equation}
and let $\varphi=\varphi_{r_1,r_2}:M\to\mathbb R$ be defined as
\begin{equation}\label{a2}
\varphi(x):=\begin{cases}
1 & r(x)<r_1,\\
\hat\varphi(x) & r_1\leq r(x) \leq r_2,\\
0 & r_2<r(x).
\end{cases}
\end{equation}
A straightforward calculation yields
\[
\int_M|\nabla\varphi|^p dV_M=\int_{B_{r_2}\setminus B_{r_1}}|\nabla\hat\varphi|^p dV_M = \left(\int_{r_1}^{r_2}a_p(s)ds\right)^{1-p}.
\]
By standard density results we can use $\varphi\in W^{1,p}_0(M)$ as a test function in the weak relation \eqref{div_deb}, thus obtaining
\begin{align}\label{weak_ineq}
\int_{M}\varphi f dV_M &\leq (\operatorname{div}X,\varphi )\\
&=-\int_{M}\left\langle X,\nabla\varphi\right\rangle\nonumber\\
&\leq \left(\int_{\operatorname{supp}\left(\nabla\varphi\right)}|X|^{p/(p-1)}\right)^{(p-1)/p}\left(\int_M|\nabla\varphi|^p\right)^{1/p}\nonumber\\
&\leq \left\{\left(\int_{B_{r_2}\setminus B_{r_1}}|X|^{p/(p-1)}\right)\left(\int_{r_1}^{r_2}a_p(s)ds\right)^{-1}\right\}^{(p-1)/p}, \nonumber
\end{align}
where we have applied H\"older in the next-to-last inequality. Now, let $\left\{R_k\right\}_{k=1}^{\infty}$ be a sequence such that $R_k\to\infty$, which realizes the $\liminf$ in condition \eqref{cond_vol}. Up to passing to a subsequence, we can suppose $R_{k+1}\geq R_k+g(R_k)$. Hence, the sequence of cut-offs $\varphi_k:=\varphi_{R_k,R_k+g(R_k)}$ converges monotonically to $1$ and applying monotone and dominated convergence, we have
\begin{align*}
\lim_{k\to\infty}\int_M \varphi_kf
= \lim_{k\to\infty}\int_M \varphi_k f_+ -\lim_{k\to\infty}\int_M\varphi_k f_- =\int_M f_+-\int_M f_- =\int_Mf.
\end{align*}
Taking limits as $k\to\infty$ in inequality \eqref{weak_ineq}, assumption $\mathcal{A}_{M,p}$ finally gives
\[
\int_{M}f\leq 0.
\]
\end{proof}

\begin{proof}[\textsc{Proof of Theorem} \ref{th_gt}]
Choosing $f=\dive X$ in Proposition \ref{weak_KNR} we get $\int_M\dive X\leq 0$ and $(\dive X)_+\in L^1(M)$. Hence, we can repeat the proof replacing $X$ with $-X$.
\end{proof}

\begin{remark}\label{cutoff}\rm{
We point out that one could easily obtain results similar to Theorem \ref{th_gt} and Proposition \ref{weak_KNR} replacing in the proofs $\varphi_{r_1,r_2}$ with standard cut-off functions $0\leq\xi_{r_1,r_2}\leq1$ defined for any $\varepsilon>0$ in such a way that
\[
\xi_{r_1,r_2}\equiv 1\textrm{ on }B_{r_1},\qquad \xi_{r_1,r_2}\equiv 0\textrm{ on }M\setminus B_{r_2},\qquad |\nabla\xi_{r_1,r_2}|\leq\frac {1+\varepsilon} {r_2-r_1}.
\]
Nevertheless $\varphi_{r_1,r_2}$ gives better results than the standard cutoffs. For example, consider a $2$-dimensional model manifold with the Riemannian metric
\begin{gather*}
 ds^2 = dt^2 + g^2(t) d\theta^2,
\end{gather*}
where $g(t)=e^{-t}$ outside a neighborhood of $0$. The $p$-energy of $\varphi_{r_1,r_2}$ and $\xi_{r_1,r_2}$ are respectively
\begin{gather*}
 \int_M \abs{\nabla \varphi_{r_1,r_2}}^p dv_M = \left(\int_{r_1}^{r_2}a_p(s)ds\right)^{1-p}= 2\pi (p-1)^{1-p}\ton{e^{{r_2}/(p-1)}-e^{{r_1}/({p-1})}}^{1-p},\\
\int_M \abs{\nabla \xi_{r_1,r_2}}^p dv_M \leq \ton{\frac{1+\epsilon}{r_2-r_1}}^p\int_{r_1}^{r_2} A(\partial B_s) ds =2\pi\ton{\frac{1+\epsilon}{r_2-r_1}}^p \ton{e^{-r_1}-e^{-r_2}}.
\end{gather*}
If we choose $r_2=2r_1\equiv 2r$ and let $r\to \infty$, we get
\begin{gather*}
  \int_M \abs{\nabla \varphi_{r,2r}}^p dv_M\sim c e^{-r} \ton{e^{{r}/({p-1})}-1}^{1-p}\sim c e^{-2r}\\
\int_M \abs{\nabla \xi_{r,2r}}^p dv_M \sim \frac{c'}{r^p} e^{-r}\ton{1-e^{-r}}\sim \frac{c'}{r^p}e^{-r},
\end{gather*}
for some positive constants $c,c'$. Using $\varphi_{r,2r}$ in the proof of Proposition \ref{weak_KNR} (in particular in inequality \eqref{weak_ineq}), we can conclude that $f=0$ provided
\begin{align*}
\lim_{r\to \infty} \left(\int_{B_{2r}\setminus B_r}|X|^{p/(p-1)}\right)^{(p-1)/{p}} c e^{-2r}=0,
\end{align*}
while using $\xi_{r,2r}$ we get a weaker result, i.e., $f=0$ provided
\begin{align*}
\lim_{r\to \infty} \left(\int_{B_{2r}\setminus B_r}|X|^{p/(p-1)}\right)^{{(p-1)}/{p}} \frac{c}{r^p}e^{-r}=0.
\end{align*}

One could ask if there exist even better cutoffs than the ones we chose. First, note that restricting to model manifolds, the cutoffs $\varphi_{r_1,r_2}$ are $p$-harmonic on $B_{r_2}\setminus B_{r_1}$, and so their $p$-energy is minimal. In general this is not true. Anyway, it turns out that the functions $\varphi_{r_1,r_2}$ are optimal at least in the class of radial functions, in fact they minimize the $p$-energy in this class, making condition $\mathcal{A}_{M,p}$ sharp. To prove this fact, consider any radial cutoff $\psi:=\psi_{r_1,r_2}$ satisfying $\psi\equiv 1\textrm{ on }B_{r_1}, \ \psi\equiv 0\textrm{ on }M\setminus B_{r_2}$. By Jensen's inequality we have
\begin{gather*}
 \int_{} \abs{\nabla\varphi_{r_1,r_2}}^p dv_M = \left(\int_{r_1}^{r_2}a_p(s)ds\right)^{1-p}=c_\psi^{1-p}\left(\int_{r_1}^{r_2}\frac{a_p(s)}{\abs {\psi'(s)}}\frac{\abs {\psi'(s)}ds}{c_\psi}\right)^{1-p} \\
\leq c_\psi^{-p}\int_{r_1}^{r_2} \abs{\psi'(s)}^p A(\partial B_s) ds \leq \int_{} \abs{\nabla\psi}^p dv_M,
\end{gather*}
where $\psi'$ is the radial derivative of $\psi$ and $c_\psi=\int_{r_1}^{r_2} \abs{\psi'(s)}ds\geq 1$.
}\end{remark}

\section{Applications}\label{appl}

Theorem \ref{th_gt}, and Proposition \ref{weak_KNR}, can be naturally applied to those situations where the standard Kelvin-Nevanlinna-Royden criterion is used to deduce information on $p$-parabolic manifolds. First, we present a global comparison result for the $p$-laplacian of real valued functions. The original result assuming $p$-parabolicity appears in \cite[Theorem 1]{HPV}.

\begin{theorem}
\label{th_comparison}Let $\left(  M,\left\langle ,\right\rangle \right)  $ be
a connected, non-compact Riemannian manifold. Assume that
$u,v\in W^{1,p}_{\text{\rm{loc}}}(M)\cap C^0(M)$, $p>1$, satisfy%
\[
\Delta_{p}u\geq\Delta_{p}v\text{ weakly on }M,
\]
and that $|\nabla u|^p$ and $|\nabla v|^p$ satisfy condition $\mathcal{A}_{M,p}$ on $M$. Then, $u=v+A$ on $M$, for some constant $A\in\mathbb{R}$.
\end{theorem}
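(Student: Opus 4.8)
The plan is to reduce Theorem \ref{th_comparison} to an application of Proposition \ref{weak_KNR} (or equivalently Theorem \ref{th_gt}) by choosing a suitable vector field $X$ built from the $p$-Laplacians of $u$ and $v$. The natural candidate is
\[
X := |\nabla u|^{p-2}\nabla u - |\nabla v|^{p-2}\nabla v,
\]
so that $\dive X = \Delta_p u - \Delta_p v \geq 0$ weakly on $M$ by hypothesis. Since $u,v\in W^{1,p}_{\mathrm{loc}}(M)$, each term of $X$ lies in $L^{p/(p-1)}_{\mathrm{loc}}(M)$, and I would check that $|X|^{p/(p-1)}$ inherits condition $\mathcal{A}_{M,p}$ from the hypotheses on $|\nabla u|^p$ and $|\nabla v|^p$. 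Here I expect to use the elementary pointwise bound $|X|^{p/(p-1)} \leq c_p\bigl(|\nabla u|^p + |\nabla v|^p\bigr)$ for some constant $c_p$ depending only on $p$ (coming from $|\,|\xi|^{p-2}\xi|=|\xi|^{p-1}$ together with the triangle inequality and convexity); since the condition $\mathcal{A}_{M,p}$ is stable under taking a finite sum integrand and multiplying by a constant, $|X|^{p/(p-1)}$ will again satisfy $\mathcal{A}_{M,p}$.

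With this setup, Proposition \ref{weak_KNR} applied to $f = \dive X$ (which is itself in $L^1_{\mathrm{loc}}$ with $f_-\equiv 0\in L^1(M)$, since $\dive X\geq 0$) yields $\int_M \dive X \,dV_M \leq 0$. Combined with $\dive X\geq 0$, this forces $\dive X = 0$ almost everywhere, i.e. $\Delta_p u = \Delta_p v$ weakly on $M$, and moreover gives the stronger weak identity $\int_M \langle X,\nabla\phi\rangle\, dV_M = 0$ for all admissible test functions. The remaining and genuinely nonlinear step is to pass from $\dive X = 0$ to the conclusion that $u-v$ is constant.

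The main obstacle is exactly this last implication: in the linear case $p=2$ one has $X = \nabla(u-v)$, so $\dive X = 0$ combined with an integration-by-parts against $u-v$ (suitably truncated) immediately gives $\nabla(u-v)=0$; but for $p\neq 2$ the operator is not linear and $X$ is not a gradient. To handle this I would exploit the strict monotonicity of the map $\xi\mapsto|\xi|^{p-2}\xi$, namely the well-known inequality
\[
\ps{|\xi|^{p-2}\xi - |\eta|^{p-2}\eta}{\xi-\eta} \geq 0,
\]
with equality if and only if $\xi=\eta$. The strategy is to test the weak identity $\int_M \langle X,\nabla\phi\rangle = 0$ with $\phi = (u-v)\varphi_k$, where $\varphi_k$ are the cutoffs $\varphi_{R_k,R_k+g(R_k)}$ from the proof of Proposition \ref{weak_KNR}; expanding $\nabla\phi = \varphi_k\nabla(u-v) + (u-v)\nabla\varphi_k$ and using Hölder on the boundary term (controlled again through condition $\mathcal{A}_{M,p}$), one should drive the $\nabla\varphi_k$-contribution to zero and be left with $\int_M \varphi_k \ps{X}{\nabla(u-v)}\,dV_M \to 0$. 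Since the integrand $\ps{X}{\nabla(u-v)} = \ps{|\nabla u|^{p-2}\nabla u - |\nabla v|^{p-2}\nabla v}{\nabla u - \nabla v}$ is pointwise nonnegative by monotonicity, its integral vanishing forces it to vanish almost everywhere, and strict monotonicity then gives $\nabla u = \nabla v$ a.e. Connectedness of $M$ together with $u,v\in C^0(M)$ finally yields $u = v + A$ for a constant $A$. The delicate point to verify carefully is that the cutoff boundary term genuinely vanishes, which requires that the extra factor $|u-v|$ does not spoil the integrability; I would address this by first establishing $\nabla(u-v)\in L^p$ on the relevant annular regions, or by an approximation/truncation of $u-v$ if needed.
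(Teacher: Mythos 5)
Your overall strategy --- set $X=|\nabla u|^{p-2}\nabla u-|\nabla v|^{p-2}\nabla v$, use Proposition \ref{weak_KNR} to kill $\int_M\dive X$, then extract $\nabla u=\nabla v$ from the strict monotonicity of $\xi\mapsto|\xi|^{p-2}\xi$ --- is the intended one: the paper gives no separate proof of Theorem \ref{th_comparison}, but refers to the argument of \cite[Theorem 1]{HPV} with the Kelvin--Nevanlinna--Royden step replaced by Proposition \ref{weak_KNR}, and the same scheme is executed in detail for Theorem \ref{th_rn}. The genuine gap is exactly the point you flag at the end and then leave unresolved. When you test with $\phi=(u-v)\varphi_k$, the boundary term is controlled by $\bigl(\int_{\operatorname{supp}\nabla\varphi_k}|u-v|^{q}|X|^{q}\bigr)^{1/q}\bigl(\int|\nabla\varphi_k|^p\bigr)^{1/p}$ with $q=p/(p-1)$, and the factor $|u-v|$ is merely continuous, hence possibly unbounded at infinity; condition $\mathcal{A}_{M,p}$ on $|\nabla u|^p+|\nabla v|^p$ gives no control over it. Your first suggested repair (getting $\nabla(u-v)\in L^p$ on annuli) does not help, since the obstruction is the zeroth-order factor $u-v$, not its gradient. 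The missing idea is the truncation you mention only in passing: test with $\gamma(u-v)\varphi_k$ for a bounded, nonnegative $\gamma$ with $\gamma'>0$ --- equivalently, work from the start with the vector field $\gamma(u-v)X$, whose $L^{q}$-norm is still dominated by $|\nabla u|^p+|\nabla v|^p$ and whose distributional divergence is bounded below by the nonnegative $L^1_{\rm loc}$ function $f=\gamma'(u-v)\left\langle X,\nabla(u-v)\right\rangle$, to which Proposition \ref{weak_KNR} applies directly and yields $f\equiv 0$. This is precisely the device used in the proof of Theorem \ref{th_rn} via $h_A(y)=\sqrt{A+r^2(y)}$, combined with Lemma \ref{coro_lind} for strictness. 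Until this step is actually carried out, the proof is incomplete.

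Two smaller points. First, your assertion that $\dive X\in L^1_{\rm loc}$ ``since $\dive X\geq 0$'' is unjustified: a nonnegative distribution is a nonnegative Radon measure, not necessarily absolutely continuous, so Proposition \ref{weak_KNR} with $f=\dive X$ does not literally apply; either extend that proposition to measures (its proof goes through verbatim) or, better, bypass this step entirely via the $\gamma(u-v)X$ route, which only needs the genuine $L^1_{\rm loc}$ minorant $f$ above. Second, ``$\mathcal{A}_{M,p}$ is stable under finite sums'' is not automatic: the two $\liminf$'s in \eqref{cond_vol} could be realized along disjoint sequences of radii and with different gauge functions $g$, so you must exhibit a single sequence $R_k$ along which the quotient for the sum tends to zero. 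The paper glosses over this as well, but your write-up should at least record that such a common sequence is being chosen.
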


Choosing a constant function $v$, we immediately deduce the following result, which generalize \cite[Corollary 3]{PRS-MathZ}.

\begin{corollary}\label{subharm}
Let $\left(  M,\left\langle ,\right\rangle \right)  $ be
a connected, non-compact Riemannian manifold. Assume that
$u\in W^{1,p}_{\text{\rm{loc}}}(M)\cap C^0(M)$, $p>1$, is a weak p-subharmonic function on $M$ such that $|\nabla u|^p$ satisfies condition $\mathcal{A}_{M,p}$ on $M$. Then $u$ is constant.
\end{corollary}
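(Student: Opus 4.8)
The plan is to deduce the statement directly from Theorem \ref{th_comparison} by specializing the comparison to the case in which one of the two functions is constant. First I would fix an arbitrary constant $A_0\in\mathbb{R}$ and set $v\equiv A_0$ on $M$. Trivially $v\in W^{1,p}_{\text{\rm{loc}}}(M)\cap C^0(M)$, so $v$ is an admissible competitor for the comparison theorem, and it remains only to check that the pair $(u,v)$ meets the two structural hypotheses of Theorem \ref{th_comparison}.

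Next I would verify those hypotheses. Since $\nabla v=0$ almost everywhere, one has $|\nabla v|^{p-2}\nabla v=0$ and hence $\Delta_p v=0$ in the weak sense; the assumption that $u$ is weakly $p$-subharmonic means precisely that $\Delta_p u\geq 0$ weakly on $M$, so $\Delta_p u\geq\Delta_p v$ weakly, as required. For the integral condition, $|\nabla v|^p\equiv 0$, so for any choice of $g:(0,+\infty)\to(0,+\infty)$ the quantity $\int_{B_{R+g(R)}\setminus B_R}|\nabla v|^p\,dV_M$ vanishes identically, whence the $\liminf$ defining $\mathcal{A}_{M,p}$ is zero; thus $|\nabla v|^p$ satisfies $\mathcal{A}_{M,p}$ trivially. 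The condition $\mathcal{A}_{M,p}$ for $|\nabla u|^p$ is part of the hypothesis.

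With every assumption of Theorem \ref{th_comparison} now in force for $(u,v)$, I would conclude that $u=v+A=A_0+A$ on $M$ for some constant $A\in\mathbb{R}$, so that $u$ is constant. Because the whole argument is a direct instantiation of the already established comparison theorem, there is no genuine analytic obstacle here: the only point deserving a line of verification is that the degenerate case $\nabla v\equiv 0$ still makes $v$ a legitimate object both in the weak formulation of $\Delta_p v\geq 0$ and in the condition $\mathcal{A}_{M,p}$, and both facts hold immediately. (If one prefers a self-contained argument, the same conclusion follows by running the proof of Proposition \ref{weak_KNR} with the vector field $X=|\nabla u|^{p-2}\nabla u$ and $f=\Delta_p u\geq 0$, which yields $\int_M\Delta_p u\leq 0$ and forces $\Delta_p u=0$, after which connectedness and the strong form of the comparison give constancy of $u$.)
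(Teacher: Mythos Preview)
Your argument is correct and is precisely the paper's approach: the corollary is deduced immediately from Theorem~\ref{th_comparison} by choosing $v$ to be a constant function. The parenthetical alternative you sketch is looser at the end (passing from $\Delta_p u=0$ to constancy of $u$ would still need the comparison or an equivalent Liouville-type step), but your primary proof is complete and matches the paper.
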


In \cite{SY}, R. Schoen and S.T. Yau considered the problem of uniqueness of the 2-harmonic representative with finite energy in a (free) homotopy class of maps from a complete manifold $M$ of finite volume to a complete manifold $N$ of non-positive sectional curvature. In particular, they obtained that if ${}^N\operatorname{Sect}<0$ then a given harmonic map $u$ is unique in its homotopy class, unless $u(M)$ is contained in a geodesic of $N$. Moreover, if the sectional curvature of $N$ is nonpositive and two homotopic harmonic maps $u$ and $v$ with finite energy are given, then $u$ and $v$ are homotopic through harmonic maps. In \cite{PRS-MathZ}, Pigola, Setti and Rigoli noticed that the assumption $V(M)<\infty$ can be replaced by asking $M$ is $2$-parabolic. In Schoen and Yau's result, the finite energy of the maps is used in two fundamental steps of the proof:
\begin{enumerate}
 \item to prove that a particular subharmonic map of finite energy is constant;
 \item to construct the homotopy via harmonic maps.
\end{enumerate}
Using the $p=2$ case of Corollary \ref{subharm}, we can deal with step (1) and thus obtaine the following theorem. If ${}^N\operatorname{Sect}\leq0$, weakening finite energy assumption in step (2) does not seem trivial to us, but we can still get a result for maps with fast $p$-energy decay without parabolicity assumption.

\begin{theorem}\label{SY}
Suppose $M$ and $N$ are complete manifolds.
\begin{enumerate}
\item[1)] Suppose ${}^N\operatorname{Sect}<0$. Let $u:M\to N$ be a harmonic map such that $|\nabla u|^2$ satisfies condition $\mathcal{A}_{M,2}$ on $M$. Then there is no other harmonic map homotopic to $u$ satisfying condition $\mathcal{A}_{M,2}$ unless $u(M)$ is contained in a geodesic of $N$.
\item[2)] Suppose ${}^N\operatorname{Sect}\leq0$. Let $u,v:M\to N$ be homotopic harmonic maps such that $|\nabla u|^2,|\nabla v|^2\in L^1(M)$ satisfy condition $\mathcal{A}_{M,2}$ on $M$. Then there is a smooth one parameter family $u_t:M\to N$ for $t\in [0,1]$ of harmonic maps with $u_0=u$ and $u_1=v$. Moreover, for each $x\in M$, the curve $\{u_t(x):t\in [0,1]\}$ is a constant $($independent of $x)$ speed parametrization of a geodesic.
\end{enumerate}
\end{theorem}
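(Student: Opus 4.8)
The plan is to follow the strategy of Schoen and Yau, isolating the two places where they invoke finiteness of $V(M)$ (replaced by $2$-parabolicity in \cite{PRS-MathZ}), feeding the $p=2$ case of Corollary \ref{subharm} into the first of them while retaining the finite-energy hypothesis exactly where the second one is needed. First I would pass to the universal cover. Since $N$ is complete with ${}^N\operatorname{Sect}\leq 0$, its universal cover $\tilde N$ is a Cartan--Hadamard manifold, on which any two points are joined by a unique geodesic and the squared distance $\rho^2(\tilde p,\tilde q)=d_{\tilde N}(\tilde p,\tilde q)^2$ is convex, i.e.\ $\nabla^2\rho^2\geq 0$. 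Two homotopic maps $u,v$ lift to $\tilde u,\tilde v:\tilde M\to\tilde N$ equivariant with respect to the same holonomy representation, so $\tilde x\mapsto d_{\tilde N}(\tilde u(\tilde x),\tilde v(\tilde x))$ descends to a well defined continuous $\phi:=d(u,v)$ on $M$; this is the length of the geodesic $\gamma_x$ from $u(x)$ to $v(x)$ in the prescribed homotopy class, and $x\mapsto\gamma_x$ is the geodesic homotopy $H:M\times[0,1]\to N$, $u_t:=H(\cdot,t)$.

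The key analytic input is that $\phi$ is weakly subharmonic. Writing $F=(\tilde u,\tilde v):\tilde M\to\tilde N\times\tilde N$, which is harmonic because both its components are, the composition formula gives $\Delta(\rho\circ F)=\operatorname{tr}_M\nabla^2\rho(dF,dF)\geq 0$ weakly, since $\rho=d_{\tilde N}$ is convex on the Cartan--Hadamard product; equivalently $\Delta\phi\geq 0$. The chain rule along the unit-speed connecting geodesic yields the pointwise bound $\abs{\nabla\phi}^2\leq 2\ton{\abs{\nabla u}^2+\abs{\nabla v}^2}$. Since $\abs{\nabla u}^2$ and $\abs{\nabla v}^2$ both satisfy $\mathcal{A}_{M,2}$ — choosing, as is already needed in Theorem \ref{th_comparison}, a common gauge $g$ and a common realizing sequence $R_k$, so that their sum satisfies $\mathcal{A}_{M,2}$ too — the smaller function $\abs{\nabla\phi}^2$ also satisfies $\mathcal{A}_{M,2}$. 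Corollary \ref{subharm} then forces $\phi$ to be constant. This is precisely the nonlinear, non-parabolic replacement for step (1) of Schoen--Yau.

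It remains to exploit $\phi\equiv c$. For part 1) I would use the pointwise identity $\Delta\phi^2=\operatorname{tr}_M\nabla^2\rho^2(dF,dF)$: constancy of $\phi$ makes the left side vanish, while ${}^N\operatorname{Sect}<0$ makes $\nabla^2\rho^2$ degenerate only along the direction tangent to the connecting geodesic. Hence every $dF(e_i)=(du(e_i),dv(e_i))$ lies in this one-dimensional kernel, each $du(e_i)$ is tangent to $\gamma_x$, and by connectedness of $M$ one concludes that either $c=0$, so $v=u$, or $u(M)$ lies in a single geodesic — the asserted dichotomy. For part 2), where ${}^N\operatorname{Sect}\leq 0$ and the maps also have finite energy, constancy of $\phi$ gives the constant, $x$-independent speed $\abs{\partial_t u_t}=\phi\equiv c$ of the geodesic homotopy; to upgrade this to ``each $u_t$ is harmonic and the family is smooth'' I would invoke verbatim step (2) of Schoen--Yau, whose integrated second-variation argument (convexity of $t\mapsto E(u_t)$ under ${}^N\operatorname{Sect}\leq 0$ together with vanishing first variation at the harmonic endpoints $t=0,1$, which forces $\nabla(\partial_t u_t)=0$) uses only $\abs{\nabla u}^2,\abs{\nabla v}^2\in L^1(M)$.

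The main obstacle I anticipate is twofold. Technically, one must ensure that $\abs{\nabla\phi}^2$ genuinely satisfies $\mathcal{A}_{M,2}$, i.e.\ that the two separately given conditions on $\abs{\nabla u}^2$ and $\abs{\nabla v}^2$ can be combined along a common $g$ and a common sequence $R_k$; this is the only point where the hypotheses of Corollary \ref{subharm} are not immediate. Conceptually, the hard part is step (2) of part 2): passing from ``$\phi$ is constant'' to ``every $u_t$ is harmonic'' rests on the vanishing of the integrated second variation, and this is exactly where one must keep the finite-energy assumption rather than $\mathcal{A}_{M,2}$, since no analogue of Corollary \ref{subharm} is available for the vector-valued quantity $\nabla(\partial_t u_t)$ that is controlled there.
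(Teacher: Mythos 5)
Your proposal is correct and follows essentially the same route as the paper, which gives no separate proof of this theorem but explicitly states that it is obtained by running the Schoen--Yau argument \cite{SY} (as adapted in \cite{PRS-MathZ}) and replacing step (1) --- the constancy of the subharmonic distance function $d(u,v)$ --- by the $p=2$ case of Corollary \ref{subharm}, while keeping the finite-energy hypothesis exactly where step (2) needs it. The one subtlety you flag, namely that $\mathcal{A}_{M,2}$ for $|\nabla u|^2$ and $|\nabla v|^2$ separately must be combined along a common $g$ and a common realizing sequence before Corollary \ref{subharm} can be applied to $|\nabla d(u,v)|^2$, is present but left implicit in the paper as well (it already arises in Theorem \ref{th_comparison}), and your reading of the hypothesis resolves it in the intended way.
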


\begin{remark}\rm{
While the existence in a homotopy class of a harmonic representative with finite energy is ensured by a further result by Schoen and Yau \cite{SY-CH}, in the setting of Theorem \ref{SY} we are not able to guarantee that there exists at least one harmonic map whose energy satisfies $\mathcal{A}_{M,2}$.
}\end{remark}

An interesting task is to extend Schoen and Yau's uniqueness results to the nonlinear ($p\neq 2$) setting. In \cite{PRS-MathZ}, the authors take a first step in this direction by proving that a map $u:M\to N$ with finite $p$-energy and homotopic to a constant is constant provided $M$ is $p$-parabolic and ${}^N\operatorname{Sect}\leq0$. Using Theorem \ref{th_gt} in the proof of their result, we easily obtain the following

\begin{theorem}Let $(M,\left\langle ,\right\rangle_M)$ and $(N,\left\langle ,\right\rangle_N)$ be complete Riemannian manifolds. Assume that
$M$ is non-compact and that $N$ has non-positive sectional curvatures. If $u :M\to N$ is a $p$-harmonic map homotopic to a constant and with energy density $|du|^p$ satisfying condition $\mathcal{A}_{M,p}$, then $u$ is a constant map.
\end{theorem}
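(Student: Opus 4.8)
The plan is to reduce the statement to the Liouville-type results of this section by composing $u$ with the distance function on the universal cover of $N$. First I would pass to the universal covering $\pi:\tilde N\to N$: since $N$ is complete with $^N\operatorname{Sect}\le 0$, the Cartan--Hadamard theorem makes $\tilde N$ a Cartan--Hadamard manifold, and the hypothesis that $u$ is homotopic to a constant kills $u_*\pi_1(M)$ and hence lets me lift $u$ to a map $\tilde u:M\to\tilde N$. Because $\pi$ is a local isometry, $\tilde u$ is again $p$-harmonic and $\abs{d\tilde u}=\abs{du}$, so $\abs{d\tilde u}^p$ still satisfies $\mathcal{A}_{M,p}$. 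Fixing a pole $o\in\tilde N$ and letting $\rho=\operatorname{dist}_{\tilde N}(\cdot,o)$, the Hessian comparison theorem for nonpositively curved manifolds gives that $\rho$ is convex, with $\operatorname{Hess}\rho\ge\frac1\rho\ton{\langle,\rangle-d\rho\otimes d\rho}\ge0$ away from $o$.

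Next I would introduce the scalar function $\phi:=\rho\circ\tilde u$ and the vector field $X:=\abs{du}^{p-2}\nabla\phi$ on $M$. The two basic estimates are a pointwise gradient bound and a distributional divergence bound. Since $\abs{\nabla\rho}=1$ one has $\abs{\nabla\phi}\le\abs{d\tilde u}=\abs{du}$, whence $\abs X\le\abs{du}^{p-1}$ and $\abs X^{q}\le\abs{du}^{p}$ with $q=p/(p-1)$; thus $\abs X^q$ satisfies $\mathcal{A}_{M,p}$ by hypothesis. On the other hand, writing $\phi=\psi\circ\tilde u$ with $\psi=\rho$ and using the $p$-harmonic map equation $\dive(\abs{d\tilde u}^{p-2}d\tilde u)=0$ together with the chain rule, a computation gives, in the weak sense,
\[
\dive X=\abs{du}^{p-2}\,\langle\operatorname{Hess}\rho\,(d\tilde u),d\tilde u\rangle\ge \frac{\abs{du}^{p-2}}{\rho}\ton{\abs{du}^2-\abs{\nabla\phi}^2}=:f\ge0 .
\]
In particular $\dive X\ge f\ge 0$ with $f_-=0\in L^1(M)$. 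Applying Proposition \ref{weak_KNR} (or, once one checks $\dive X\in L^1_{\rm loc}$, Theorem \ref{th_gt}) then forces $\int_M f\le0$, hence $f\equiv0$, i.e.\ $\abs{\nabla\phi}=\abs{du}$ almost everywhere.

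Finally I would feed this equality back into the construction. Where $\abs{\nabla\phi}=\abs{du}$ we have $\abs{\nabla\phi}^{p-2}\nabla\phi=\abs{du}^{p-2}\nabla\phi=X$ almost everywhere, so $\Delta_p\phi=\dive X\ge0$; that is, $\phi=\rho\circ\tilde u\in W^{1,p}_{\rm loc}(M)\cap C^0(M)$ is weakly $p$-subharmonic, and $\abs{\nabla\phi}^p=\abs{du}^p$ satisfies $\mathcal{A}_{M,p}$. Corollary \ref{subharm} then yields that $\phi$ is constant, so $\nabla\phi\equiv0$ and therefore $\abs{du}=\abs{\nabla\phi}\equiv0$, proving that $u$ is constant. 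I expect the main obstacle to be making the divergence identity rigorous: $p$-harmonic maps are only $C^{1,\alpha}$, and $\rho$ is merely Lipschitz and fails to be smooth along $\tilde u^{-1}(o)$, so the formula for $\dive X$, the inequality $\operatorname{Hess}\rho\ge0$, and the local integrability of $f$ near $\tilde u^{-1}(o)$ must all be justified distributionally (for instance by convexity of $\rho$ and an approximation/mollification argument, or by choosing the pole $o$ off the image of $\tilde u$ when this is possible). The remaining steps are then routine applications of the results already established.
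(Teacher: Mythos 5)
Your overall strategy is the right one and is, in substance, what the paper intends: the paper gives no self-contained proof of this theorem, but simply says to run the argument of \cite{PRS-MathZ} --- lift to the Cartan--Hadamard universal cover, build a vector field out of $\abs{du}^{p-2}du$ composed with the differential of a convex function of the distance $\rho$ to a pole, and apply a Stokes-type conclusion --- with Proposition \ref{weak_KNR}/Theorem \ref{th_gt} replacing the Kelvin--Nevanlinna--Royden criterion. Your lifting step, the bound $\abs{X}^{p/(p-1)}\le\abs{du}^{p}$ (so that condition $\mathcal{A}_{M,p}$ is inherited by monotonicity), and the decision to invoke Proposition \ref{weak_KNR} rather than Theorem \ref{th_gt} (since $\dive X\in L^1_{\rm loc}$ is not known a priori) are all exactly as they should be.

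The weak point is the one you flagged yourself, and it is not merely cosmetic: with $h=\rho$ the minorant $f=\rho^{-1}\abs{du}^{p-2}\ton{\abs{du}^2-\abs{\nabla\phi}^2}$ is only bounded by $\abs{du}^{p}/(\rho\circ\tilde u)$, so it need not be locally integrable near $\tilde u^{-1}(o)$; then the distributional inequality $\dive X\ge f$ is not even well formulated and Proposition \ref{weak_KNR} does not apply as stated. Moreover the chain-rule computation uses $d\rho|_{\tilde u}$ as a test object, which is discontinuous at the pole and whose covariant derivative blows up like $1/\rho$ there, and the pole cannot in general be moved off the image of $\tilde u$. The standard repair --- the one used in \cite{PRS-MathZ} and, for $\R^n$ targets, in this paper's own proof of Theorem \ref{th_rn} --- is to replace $\rho$ by the smooth, strictly convex exhaustion $h_A=\sqrt{A+\rho^2}$, $A>0$. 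Since $\rho^2$ is smooth on a Cartan--Hadamard manifold with $\operatorname{Hess}(\rho^2)\ge 2\langle,\rangle$, and since $\abs{\nabla h_A}\le 1$, one gets $\abs{X_A}\le\abs{du}^{p-1}$ and, weakly,
\begin{equation*}
\dive X_A\ \ge\ \frac{A}{\ton{A+\rho^2\circ\tilde u}^{3/2}}\,\abs{du}^{p}\ \ge\ 0,
\end{equation*}
where the minorant is now continuous, hence in $L^1_{\rm loc}(M)$, with vanishing negative part. Proposition \ref{weak_KNR} then forces this minorant to vanish identically, i.e.\ $\abs{du}\equiv 0$, in one stroke; this also removes the need for your second pass through Corollary \ref{subharm}. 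As written, your proof is incomplete until the $A\to 0$ limit (or an equivalent mollification of $\rho$) is actually carried out, and the cleanest course is simply to work with $h_A$ from the start.
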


In \cite{HPV}, the authors apply the Kelvin-Nevanlinna-Royden
criterion to obtain a vector valued version of their
comparison theorem. In some sense this result is a further step in
treating the problem of the uniqueness of $p$-harmonic
representative. In particular, if $M$ is $p$-parabolic and
$u,v:M\to\mathbb R^n$ are $C^{\infty}$ maps satisfying $\Delta_p u =
\Delta_p v$ and $|du|,|dv|\in L^p$, then $u=v+A$ for some constant
vector $A\in\mathbb R^n$. To prove it, they construct a vector
field $X$ depending on $du$ and $dv$, whose divergence is such
that
\begin{equation}\label{div_neg}
0\neq (\dive X)_-\leq C(|du|^p+|dv|^p)\in L^1.
\end{equation}
As a matter of fact, in their proof $X$ is defined in such a way
that the smoothness of $u$ and $v$ seems to be strictly necessary to do
computations. In order to generalize their result, in the direction of Proposition
\ref{weak_KNR}, apparently assumption (\ref{div_neg}) can not be
dropped. Nevertheless, in case $|du|,|dv|\in L^p$ and their
$L^p$-norms decay fast with respect to the volume in the
$\mathcal{A}_{M,p}$ sense, we obtain a similar result for non
$p$-parabolic manifolds and for maps with low regularity.

\begin{theorem}\label{th_rn}
Suppose that $\left(  M,\left\langle ,\right\rangle \right)  $ is
a complete non-compact Riemannian manifold. For $p>1$, let
$u,v:M\rightarrow\mathbb{R}^{n}$ be $C^0\cap W^{1,p}_{\rm{loc}}(M)$
maps satisfying%
\begin{equation}
\Delta_{p}u=\Delta_{p}v\text{ on }M, \label{assumption_pDelta}%
\end{equation}
in the sense of distributions on $M$ and%
\[
\left\vert du\right\vert ,\left\vert dv\right\vert \in L^{p}\left(
M\right) .
\]
Suppose either $M$ is $p$-parabolic or $|du|^p$ and $|dv|^p$ satisfy
condition $\mathcal{A}_{M,p}$ on $M$. Then $\ u=v+C\mathrm{,}$ for some
constant $C\in\mathbb{R}^{n}$.
\end{theorem}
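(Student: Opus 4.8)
The plan is to reduce the statement to showing that $w:=u-v$ has vanishing differential almost everywhere; since $w\in C^0\cap W^{1,p}_{\rm loc}(M)$ and $M$ is connected, $dw=0$ a.e. forces $w$ to be a constant vector $C\in\R^n$, which is the conclusion. To detect $dw=0$ I would exploit the monotonicity of $\xi\mapsto\abs{\xi}^{p-2}\xi$ on matrices. Writing $Y_i:=\abs{du}^{p-2}\nabla u^i-\abs{dv}^{p-2}\nabla v^i$ for $i=1,\dots,n$, the hypothesis $\Delta_p u=\Delta_p v$ in the sense of distributions says precisely that $\dive Y_i=0$ weakly for every $i$. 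The natural candidate for the Kelvin--Nevanlinna--Royden machinery is then $X:=\sum_i w^i Y_i$, whose formal divergence is
\[
\dive X=\sum_i\ps{Y_i}{\nabla w^i}=\ps{\abs{du}^{p-2}du-\abs{dv}^{p-2}dv}{du-dv}=:f,
\]
the cancellation of the $\sum_i w^i\dive Y_i$ terms being exactly the content of \eqref{assumption_pDelta}. Pointwise monotonicity gives $f\geq0$, with strict positivity wherever $du\neq dv$; hence it suffices to prove $\int_M f\leq0$.

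To run the argument of Proposition \ref{weak_KNR} I would like to test the $n$ relations $\dive Y_i=0$ against $\varphi_{r_1,r_2}\,w^i$ and apply H\"older exactly as in \eqref{weak_ineq}. The obstruction, and the genuinely delicate point, is that $w=u-v$ need not be bounded at infinity, so that $\abs{X}^{q}\leq C\abs{w}^{q}\ton{\abs{du}^p+\abs{dv}^p}$ carries an uncontrolled factor $\abs{w}^q$ on the annuli $B_{r_2}\setminus B_{r_1}$; one cannot deduce condition $\mathcal{A}_{M,p}$ for $\abs{X}^q$ from the assumption that $\abs{du}^p,\abs{dv}^p$ satisfy it (nor, in the parabolic case, that $\abs{X}^q\in L^1$). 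This is precisely where the smoothness used by HPV is traded for the weaker hypotheses $u,v\in C^0$ and $\abs{du},\abs{dv}\in L^p(M)$.

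I would resolve this by a radial truncation in the target. Let $\Psi_k:\R^n\to\R^n$ be the nearest-point projection onto the ball of radius $k$, so that $\Psi_k$ is $1$-Lipschitz, $\abs{\Psi_k}\leq k$, and its a.e.\ differential $D\Psi_k$ is symmetric positive semidefinite, equal to the identity on $\{\abs{w}<k\}$ and to $\tfrac{k}{\abs{w}}(I-\hat w\otimes\hat w)$ on $\{\abs{w}>k\}$, with $\hat w=w/\abs{w}$. Testing the $i$-th relation against $\varphi_{r_1,r_2}\,\Psi_k(w)^i\in W^{1,p}_0(M)$ (admissible since $\Psi_k\circ w\in C^0\cap W^{1,p}_{\rm loc}$ is bounded) and summing over $i$, the interior term becomes $\int\varphi\sum_{i,j}(D\Psi_k)_{ij}\ps{Y_i}{\nabla w^j}$, which equals $\int_{\{\abs{w}<k\}}\varphi f$ plus a remainder supported on $\{\abs{w}>k\}$. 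The gain is twofold: the truncated field $\tilde X_k:=\sum_i\Psi_k(w)^iY_i$ now satisfies $\abs{\tilde X_k}^q\leq Ck^q\ton{\abs{du}^p+\abs{dv}^p}$, so for each fixed $k$ condition $\mathcal{A}_{M,p}$ (respectively $L^1$-integrability, in the parabolic case) does transfer from $\abs{du}^p,\abs{dv}^p$ to $\abs{\tilde X_k}^q$; and, using $\abs{dw}\leq\abs{du}+\abs{dv}$ and Young's inequality, the remainder is bounded in absolute value by $C\ton{\abs{du}^p+\abs{dv}^p}\in L^1(M)$ uniformly in $k$.

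Carrying out the limits, I would first let the supports go to infinity along the sequence realizing the $\liminf$ in \eqref{cond_vol} (or invoke Remark \ref{parabolic_weak_KNR} when $M$ is $p$-parabolic), exactly as in the proof of Proposition \ref{weak_KNR}, to kill the boundary term $\int\ps{\tilde X_k}{\nabla\varphi}$; this yields $\int_{\{\abs{w}<k\}}f\leq C\int_{\{\abs{w}>k\}}\ton{\abs{du}^p+\abs{dv}^p}$. Letting $k\to\infty$, the left-hand side increases to $\int_M f$ by monotone convergence, while the right-hand side tends to $0$ because $\abs{du}^p+\abs{dv}^p\in L^1(M)$ and, $w$ being finite everywhere, the sets $\{\abs{w}>k\}$ shrink to a null set. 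Hence $\int_M f\leq0$, forcing $f=0$ and therefore $du=dv$ almost everywhere, which gives $u=v+C$. I expect the main work to lie in the careful justification that the remainder on $\{\abs{w}>k\}$ is genuinely dominated by $\abs{du}^p+\abs{dv}^p$ and that the two limits ($r\to\infty$ then $k\to\infty$) may be interchanged in this order.
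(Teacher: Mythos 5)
Your proposal is correct, and it reaches the conclusion by a genuinely different device than the paper. The paper handles the unboundedness of $w=u-v$ by a \emph{smooth} damping in the target: it sets $X_A=\bigl[dh_A|_{w}\circ(|du|^{p-2}du-|dv|^{p-2}dv)\bigr]^\sharp$ with $h_A(y)=\sqrt{A+|y|^2}$, so that the offending factor becomes $|w|/\sqrt{A+|w|^2}\le 1$ and $|X_A|^{p/(p-1)}\le 2^{1/(p-1)}(|du|^p+|dv|^p)$; the price is that differentiating the weight produces a genuinely negative term in $\dive X_A\ge f_A$, which then has to be beaten by a two-parameter limiting argument (split $M$ into $\{|w|\le T\}$ and its complement, let $A\to\infty$ with $A>T^2/2$, then $T\to\infty$). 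Your Lipschitz truncation $\Psi_k$ replaces this with a sharp cut: the error is not spread over all of $M$ but localized to $\{|w|>k\}$, where it is dominated by $C(|du|^p+|dv|^p)$ via exactly the same Cauchy--Schwarz/Young estimate the paper uses, and a single limit $k\to\infty$ kills it by dominated convergence. Both arguments funnel through Proposition \ref{weak_KNR} (or its parabolic variant of Remark \ref{parabolic_weak_KNR}) applied to a field whose $L^{p/(p-1)}$-density is controlled by $|du|^p+|dv|^p$, and both use Lemma \ref{coro_lind} to convert $\int_M f\le 0$ into $du=dv$ a.e.; your bookkeeping is arguably lighter. The one point you should make explicit is the chain rule $d(\Psi_k\circ w)=D\Psi_k(w)\,dw$ for the merely Lipschitz map $\Psi_k$ composed with $w\in C^0\cap W^{1,p}_{\rm{loc}}$: this is standard (Marcus--Mizel/Ambrosio--Dal Maso, or smooth $\Psi_k$ and pass to the limit), and the ambiguity on the interface $\{|w|=k\}$ is harmless because there $\langle w,dw\rangle=0$ a.e., so the two candidate expressions for the differential coincide; but it is precisely the technicality that the paper's choice of a smooth $h_A$ is designed to avoid, so it deserves a sentence. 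With that addition your argument is complete.
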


\begin{remark}\rm{
In Proposition \ref{prop_cfrcap}, we saw that the capacity of a condenser \linebreak $(B_{r_1},B_{r_2})$ can be estimated from above using either the behaviour of $V(B_s)$ or the behaviour of $A(\partial B_s)$. This suggests that condition $\mathcal{A}_{M,p}$ should have an analogue in which $A(\partial B_s)$ is replaced by $V(B_s)$. This fact is useful since it is usually easier to verify and to handle volume growth assumptions than area growth conditions.}
\begin{definition}
We say that a real function $f:M\to\mathbb R$ satisfies condition \ref{cond_vol2} on $M$ for some $p>1$ if there exists a function $g:(0,+\infty)\to(0,+\infty)$ such that
\begin{equation}\label{cond_vol2}\tag{$\mathcal{V}_{M,p}$}
\liminf_{R\to\infty}\left(\int_{B_{(R+g(R))}\setminus B_R}fdV_M\right)\left(\int_R^{R+g(R)} \ton{\frac{t}{V(t)}}^{1 /(p-1)} ds\right)^{-1}=0.
\end{equation}
\end{definition}
\rm{Indeed, it turns out that in every proposition stated in Section \ref{appl}, condition $\mathcal{A}_{M,p}$ can be replaced by condition \ref{cond_vol2}, and the proofs remain almost the same.}
\end{remark}

\section{Proof of Theorem \ref{th_rn}}

We can now proceed to prove Theorem \ref{th_rn}. Recall that, by
definition, \eqref{assumption_pDelta} holds in the sense of
distributions on $M$ if
\[
\int_M \langle\eta,|dv|^{p-2}dv-|du|^{p-2}du\rangle_{HS}=0
\]
for every compactly supported $\eta\in T^{\ast}M\otimes\mathbb
R^n$. Here $\langle,\rangle_{HS}$ stands for the Hilbert-Schmidt
scalar product on $T^{\ast}M\otimes\mathbb R^n$. Moreover, we
mention the following lemma, derived by a basic inequality by
Lindqvist \cite{L}, which will be useful later.

\begin{lemma}[\cite{HPV} Corollary 5]
\label{coro_lind} Let $\left(  V,\left\langle ,\right\rangle
\right)  $ be a finite dimensional,
real vector space endowed with a positive definite scalar product and let $p>1$. Then, for every $x,y\in V$, it holds%
\begin{equation}
\left\langle \left\vert x\right\vert ^{p-2}x-\left\vert
y\right\vert
^{p-2}y,x-y\right\rangle \geq 2C(p)\Psi(x,y), \label{MHCK}%
\end{equation}
where
\[
\Psi(x,y):=\begin{cases}\left\vert
x-y\right\vert ^{p} & p\geq2\\
\left\vert x-y\right\vert ^{2}/(|x|+|y|)^{2-p}&
1<p<2,\end{cases}
\]
and $C(p)$ is a positive constant depending only on $p$.
\end{lemma}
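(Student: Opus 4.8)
The inequality \eqref{MHCK} is the standard strong monotonicity of the map $F(z):=\abs z^{p-2}z$, which is the gradient of the convex potential $z\mapsto \abs z^{p}/p$. The plan is to express the left-hand side as the integral of the Hessian quadratic form of this potential along the segment joining $y$ to $x$, and then to bound that quadratic form in the two regimes $p\ge2$ and $1<p<2$ separately.

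First I would set $w:=x-y$ and $z_t:=y+tw$ for $t\in[0,1]$, and write
\[
\ps{\abs x^{p-2}x-\abs y^{p-2}y}{x-y}=\int_0^1\frac{d}{dt}\ps{F(z_t)}{w}\,dt=\int_0^1 \ps{DF(z_t)w}{w}\,dt.
\]
Differentiating $F$ (wherever $z\neq0$) gives the quadratic form
\[
\ps{DF(z)w}{w}=\abs z^{p-2}\abs w^2+(p-2)\abs z^{p-4}\ps z w^2 .
\]
When $p\ge2$ the second summand is nonnegative, so $\ps{DF(z)w}{w}\ge \abs z^{p-2}\abs w^2$; when $1<p<2$ the Cauchy--Schwarz bound $\ps z w^2\le\abs z^2\abs w^2$ together with $p-2<0$ yields $\ps{DF(z)w}{w}\ge(p-1)\abs z^{p-2}\abs w^2$. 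In both cases the problem reduces to bounding $\int_0^1\abs{z_t}^{p-2}\,dt$ from below.

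For $p\ge 2$ I would use that $t\mapsto\abs{z_t}^2$ is the quadratic $\abs{z_t}^2=d^2+\abs w^2(t-t_*)^2$, where $d$ is the distance from the origin to the line through $x,y$ and $t_*$ locates the foot of the perpendicular. Dropping the nonnegative term $d^2$ gives $\int_0^1\abs{z_t}^{p-2}dt\ge\abs w^{p-2}\int_0^1\abs{t-t_*}^{p-2}dt$, and a one–variable minimization shows $\int_0^1\abs{t-t_*}^{p-2}dt\ge 2^{2-p}/(p-1)$ uniformly in $t_*\in\R$, the minimum being attained at $t_*=1/2$ and only growing when $t_*\notin[0,1]$. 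This produces \eqref{MHCK} with $\Psi=\abs{x-y}^p$. For $1<p<2$ the estimate is even simpler: since $z_t$ is a convex combination of $x$ and $y$ one has $\abs{z_t}\le\abs x+\abs y$, and because $p-2<0$ this gives $\abs{z_t}^{p-2}\ge(\abs x+\abs y)^{p-2}$, whence $\int_0^1\abs{z_t}^{p-2}dt\ge(\abs x+\abs y)^{p-2}$ and \eqref{MHCK} follows with $\Psi=\abs{x-y}^2/(\abs x+\abs y)^{2-p}$.

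The only genuinely delicate point is the justification of the integral representation when the segment $\{z_t\}$ passes through the origin, since $DF$ is singular there for $p<2$. I expect this to be the main obstacle, but it is mild: $F$ is continuous at $0$ for every $p>1$, the segment meets $0$ for at most one value of $t$, and near such a value the integrand is controlled by $\abs{t-t_0}^{p-2}$, which is integrable since $p-2>-1$. Hence $t\mapsto\ps{F(z_t)}{w}$ is absolutely continuous and the fundamental theorem of calculus applies; alternatively one may prove the inequality first for the regularized potential $(\abs z^2+\varepsilon)^{p/2}$ and let $\varepsilon\to0$. Finally, since every quantity in \eqref{MHCK} depends only on $\abs x$, $\abs y$ and $\ps x y$, one could reduce at the outset to the planar case $V=\R^2$, although the argument above does not require this.
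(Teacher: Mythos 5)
Your proof is correct. Note, however, that the paper itself contains no proof of this lemma: it is imported verbatim as Corollary 5 of \cite{HPV}, where it is obtained by symmetrizing a convexity inequality of Lindqvist \cite{L} for the potential $z\mapsto\abs z^p$, namely $\abs y^p\geq \abs x^p+p\ps{\abs x^{p-2}x}{y-x}+c(p)\Psi(x,y)$; adding this to the inequality with $x$ and $y$ interchanged makes the zeroth- and first-order terms cancel and yields \eqref{MHCK} directly. Your route is genuinely different in mechanism: you prove strong monotonicity of $F(z)=\abs z^{p-2}z$ from scratch by integrating the Hessian quadratic form $\ps{DF(z_t)w}{w}=\abs{z_t}^{p-2}\abs w^2+(p-2)\abs{z_t}^{p-4}\ps{z_t}{w}^2$ along the segment, splitting the two regimes exactly as needed. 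All the steps check out: the pointwise bounds (nonnegativity of the second term for $p\geq2$, Cauchy--Schwarz giving the factor $p-1$ for $1<p<2$), the reduction $\abs{z_t}^2=d^2+\abs w^2(t-t_*)^2$ with the one-variable minimization $\int_0^1\abs{t-t_*}^{p-2}dt\geq 2^{2-p}/(p-1)$ (minimum at $t_*=1/2$, and indeed monotone growth for $t_*$ outside $[0,1]$ since $g'(t_*)=(-t_*)^{p-2}-(1-t_*)^{p-2}<0$ there), and the convex-combination bound $\abs{z_t}\leq\abs x+\abs y$ for the singular range. You are also right that the only delicate point is absolute continuity of $t\mapsto\ps{F(z_t)}{w}$ when the segment meets the origin and $1<p<2$, and both of your fixes work: local integrability of $\abs{t-t_0}^{p-2}$ since $p-2>-1$, or regularizing with $(\abs z^2+\varepsilon)^{p/2}$, whose Hessian form is still bounded below by $(p-1)(\abs z^2+\varepsilon)^{(p-2)/2}\abs w^2$. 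What your argument buys is self-containedness and explicit constants, $2C(p)=2^{2-p}/(p-1)$ for $p\geq2$ and $2C(p)=p-1$ for $1<p<2$; what the citation route buys is brevity and, in the range $p\geq2$, Lindqvist's sharper constant $1/(2^{p-1}-1)$ --- irrelevant here, since the lemma only asserts the existence of some positive $C(p)$.
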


\begin{proof}[\textsc{Proof of Theorem \ref{th_rn}}]

First, we assume that $M$ is
$p$-parabolic. We suppose that at least one of $u$ or $v$ is non-constant,
for otherwise there is
nothing to prove. Fix $q_{0}\in M$ and set $C:=u(q_{0})-v(q_{0}%
)\in\mathbb{R}^{n}$. Replacing $v$ with $\tilde v := v+ C$ if necessary, we can suppose $C=0$. Introduce the radial function $r:\mathbb{R}%
^{n}\rightarrow\mathbb{R}$ defined as $r(x)=|x|$. For $A>1$,
consider the weakly differentiable vector field $X_{A}$ defined as
\[
X_{A}(x):=\left[  \left.  dh_{A}\right\vert _{(u-v)(x)}\circ\left(  |du(x)|^{p-2}%
du(x)-|dv(x)|^{p-2}dv(x)\right)  \right]  ^{\sharp}%
,\quad x\in M,
\]
where $h_{A}\in C^{\infty}(\mathbb{R}^{n},\mathbb{R})$ is the
function
\[
h_{A}(y):=\sqrt{A+r^2(y)}
\]
and $\sharp$ denotes the usual musical isomorphism defined by $\langle\omega^{\sharp},V\rangle=\omega(V)$
for all differential 1-forms $\omega$ and vector fields $V$. We
observe that $X_{A}$ is well defined since there exists a
canonical identification
\[
T_{(u-v)(q)}\mathbb{R}^{n}\cong T_{u(q)}\mathbb{R}^{n}\cong T_{v(q)}%
\mathbb{R}^{n}\cong\mathbb{R}^{n}.
\]
Compute
\begin{align*}
&dh_A=\frac{dr^2}{2\sqrt{A+r^2}}\\
\end{align*}
and observe that, because of the special structure of $\mathbb R^n$, for
each vector field $Y$ on $\mathbb R^n$ it holds
\[
(dr^2)|_{(u-v)(x)}(Y) = 2\left\langle
(u-v)(x),Y\right\rangle_{\mathbb R^n} .
\]
By definition of weak
divergence, for each test function $0\leq\phi\in C^{\infty}_c(M)$, we have
\begin{align*}
&-(\dive X_A,\phi)=\int_M \left\langle X_A,\nabla\phi\right\rangle_M \\
&= \int_M \left\langle \left[  \left.  dh_{A}\right\vert _{(u-v)(x)}\circ\left(  |du(x)|^{p-2}%
du(x)-|dv(x)|^{p-2}dv(x)\right)  \right]  ^{\sharp},\nabla\phi\right\rangle_M\nonumber\\
&=\int_M \left.  \frac{dr^2}{2\sqrt{A+r^2}}\right\vert _{(u-v)(x)}\circ\left(  |du(x)|^{p-2}%
du|_x-|dv(x)|^{p-2}dv|_x\right) (\nabla\phi)\nonumber\\
&=\int_M \frac{1}{\sqrt{A+r^2(u-v)(x)}}\left\langle (u-v)(x),\left(|du(x)|^{p-2}%
du|_x-|dv(x)|^{p-2}dv|_x\right)(\nabla\phi(x))\right\rangle_{\mathbb
R^n}.\nonumber
\end{align*}
Since $u,v\in W^{1,p}_{\rm{loc}}(M)$, assumption
\eqref{assumption_pDelta} implies that

\begin{align}\label{long}
0&= \int_M \left\langle d\left(\frac{(u-v)\phi}{\sqrt{A+r^2(u-v)}}\right),|du|^{p-2}%
du-|dv|^{p-2}dv\right\rangle_{HS}\\
&=\int_M \frac{1}{\sqrt{A+r^2(u-v)}}\left\langle d\phi \otimes (u-v),|du|^{p-2}%
du-|dv|^{p-2}dv\right\rangle_{HS}\nonumber\\
&+\int_M \frac{\phi}{\sqrt{A+r^2(u-v)}}\left\langle du-dv,|du|^{p-2}%
du-|dv|^{p-2}dv\right\rangle_{HS}\nonumber\\
&-\int_M \frac{\phi }{2\left(A+r^2(u-v)\right)^{3/2}}\left\langle dr^2|_{(u-v)}\circ(du-dv) \otimes (u-v),|du|^{p-2}%
du-|dv|^{p-2}dv\right\rangle_{HS}\nonumber
\end{align}
\begin{align*}
\geq& -(\dive X_A,\phi)\nonumber\\
&+\int_M \frac{2C(p)\phi}{\sqrt{A+r^2(u-v)}}\Psi(du,dv)\nonumber\\
&-\int_M\frac{\phi r^2(u-v)}{(A+r^2(u-v))^{3/2}}(|du|+|dv|)(|du|^{p-1}%
+|dv|^{p-1})\nonumber,
\end{align*}
where we have used Lemma \ref{coro_lind} for the second term and
Cauchy-Schwarz inequality for the
third one. Setting
\[
f_A:=\frac{2C(p)}{\sqrt{A+r^2(u-v)}}\Psi -\frac{2
r^2(u-v)}{(A+r^2(u-v))^{3/2}}(|du|^p+|dv|^p),
\]
by Young's inequality, \eqref{long} gives
\begin{equation}\label{w_div}
\dive X_A\geq f_A
\end{equation}
in the sense of distributions.

Let us now compute the $L^{p/(p-1)}%
$-norm of $X_{A}.$ Since
\[
\left\vert |du|^{p-2}du-|dv|^{p-2}dv\right\vert
^{p/(p-1)}\leq\left( |du|^{p-1}+|dv|^{p-1}\right)
^{p/(p-1)}\leq2^{\frac{1}{p-1}}\left(
|du|^{p}+|dv|^{p}\right)  ,
\]
we have
\begin{align*}
|X_A|^{p/(p-1)}&=\left|\sqrt{\frac{r^2(u-v)}{A+r^2(u-v)}}\right|^{p/(p-1)}\left\vert |du|^{p-2}du-|dv|^{p-2}dv\right\vert ^{p/(p-1)}\\
&\leq 2^{\frac{1}{p-1}}\left( |du|^{p}+|dv|^{p}\right) \in L^1(M)
.
\end{align*}
Hence $X_A$ is a weakly differentiable vector field with
$|X_{A}|\in L^{p/(p-1)}(M)$.

To apply Proposition
\ref{weak_KNR} (in the $p$-parabolic version pointed out in Remark
\ref{parabolic_weak_KNR}), it remains to show that $(f_A)_-\in
L^{1}(M)$. To this purpose, we note that

\begin{align}\label{div_-}
(f_A)_-&
\leq \frac{2r^2(u-v)}{(A+r^2(u-v))^{3/2}}(|du|^{p}%
+|dv|^{p})\\
&\leq \frac{r^2(u-v)}{A+r^2(u-v)}\frac{2}{\sqrt{A+r^2(u-v)}}(|du|^{p}%
+|dv|^{p})\nonumber\\
&\leq \frac 2 {\sqrt{A}}(|du|^{p}%
+|dv|^{p})\in L^1(M)\nonumber.
\end{align}
%
Then, the assumptions of Proposition \ref{weak_KNR} are satisfied
and we get, for every $A>1$,

\begin{align}\label{div0}
0&\geq\int_{M}f_{A}\\
&=\int_{M}\left[\frac{2C(p)}{\sqrt{A+r^2(u-v)}}\Psi
-\frac{2r^2(u-v)}{(A+r^2(u-v))^{3/2}}(|du|^{p}%
+|dv|^{p})\right].\nonumber
\end{align}
Fix $T>0$ and define

\[
M_T:=\left\{x\in M \ \vert\  r(u-v)(x)\leq T\right\}\quad\textrm{and}\quad M^T:= M\setminus M_T.
\]
Then, we can write (\ref{div0}) as

\begin{align}\label{div01}
0&\geq\int_{M^T}f_A + \int_{M_T}\frac{2C(p)}{\sqrt{A+r^2(u-v)}}\Psi
-\int_{M_T}\frac{2r^2(u-v)}{(A+r^2(u-v))^{3/2}}(|du|^{p}%
+|dv|^{p}).
\end{align}
Note that

\begin{align}\label{div1}
&\int_{M^T}f_A\geq
-\int_{M^T}\frac{2}{\sqrt{A+T^2}}(|du|^{p}%
+|dv|^{p}) = -\frac{2}{\sqrt{A+T^2}}\int_{M^T}(|du|^{p}%
+|dv|^{p}).
\end{align}

On the other hand, to deal with $\int_{M_T}f_A$, observe that

\begin{align}\label{div2}
\int_{M_T}\frac{2C(p)}{\sqrt{A+r^2(u-v)}}\Psi \geq
\frac{2C(p)}{\sqrt{A+T^2}}\int_{M_T}\Psi.
\end{align}
Furthermore, the real function $t\mapsto2t/(A+t)^{3/2}$ has a
global maximum at $t=2A$ and is increasing in $(0,2A)$. Hence, up
to choosing $A>T^2/2$, we have also

\begin{align}\label{div3}
\int_{M_T}\frac{2r^2(u-v)}{(A+r^2(u-v))^{3/2}}(|du|^{p}%
+|dv|^{p})
\leq \frac{2T^2}{(A+T^2)^{3/2}}\int_{M_T}(|du|^{p}%
+|dv|^{p}).
\end{align}
Inserting \eqref{div1}, \eqref{div2} and \eqref{div3} in
\eqref{div01}, we get

\begin{align*}
\frac{2C(p)}{\sqrt{A+T^2}}\int_{M_T}\Psi
&\leq \frac{2T^2}{(A+T^2)^{3/2}}\int_{M_T}(|du|^{p}%
+|dv|^{p})\\
&+ \frac{2}{\sqrt{A+T^2}}\int_{M^T}(|du|^{p}%
+|dv|^{p}),
\end{align*}
which gives

\begin{align*}
C(p)\int_{M_T}\Psi
\leq \int_{M^T}(|du|^{p}%
+|dv|^{p})+ \frac{T^2}{{\sqrt{A+T^2}}}\int_{M_T}(|du|^{p}%
+|dv|^{p}),
\end{align*}
for all $A>\max\{1,T^2/2\}$. Letting $A\to+\infty$, this latter
yields

\begin{align}\label{fin}
C(p)\int_{M_T}\Psi
\leq\int_{M^T}(|du|^{p}%
+|dv|^{p}).
\end{align}

Since $(|du|^p+|dv|^p)\in L^1(M)$, for $T\to\infty$ we can apply respectively dominated convergence on the right-hand side and monotone convergence on the left-hand side of (\ref{fin}), thus getting

\[
C(p)\int_{M}\Psi=0,
\]
which in turn gives $|d(u-v)|\equiv 0$ on $M$, that is, $u-v\equiv
u(q_{0})-v(q_{0})=C$ on $M$. This conclude the first part of the
proof.

If $M$ is not $p$-parabolic, proceeding as above, condition
$\mathcal{A}_{M,p}$ and Proposition
\ref{weak_KNR} give \eqref{div0}. From there on, we can repeat the
proof of the $p$-parabolic case.
\end{proof}

\section{Final remarks}
So far, we applied Theorem \ref{th_gt} and Proposition \ref{weak_KNR} to generalize the results for which originally a $p$-parabolicity assumption on the domain manifold was used to apply the Kelvin-Nevanlinna-Royden criterion and deduce a Stokes' type conclusion. In particular we showed that the $p$-parabolicity can be replaced by suitable volume growth estimates. As a matter of fact the functions $\varphi_{r_1,r_2}$ defined in the proof of Proposition \ref{prop_cfrcap} seem to naturally appear each time a control on the $L^p$-norm of the gradient of a cut-off function is required. For instance, consider a $m$-dimensional manifold $M$ supporting an Euclidean type Sobolev inequality, i.e.,
\begin{equation}\label{sob}
\left\|\eta\right\|_p^q\leq S_M^q\left\|\nabla\eta\right\|_q^q\qquad \text{for all } \eta\in C_c^{\infty}(M)
\end{equation}
holds for some positve constant $S_M$ and for some $1<q<m$ and $p=mq/(m-q)$. For such manifolds, Carron \cite{C}, proved that there is an almost Euclidean lower bound for the volume growth of the geodesic ball. Namely, there exists an explicit positive $\gamma>0$ depending on $m$ and $q$ such that
\begin{equation}\label{carron}
V(B_r)\geq \gamma r^m,
\end{equation}
for all $r>0$ (but to the best of our knowledge no lower control on $A(\partial B_r)$ is given). Moreover Cao, Shen and Zhu \cite{CSZ} and Li and Wang \cite{LW} observed that the validity of (\ref{sob}) for $q=2$ implies $M$ is $2$-hyperbolic (see also \cite{PST} for the $q\neq2$ case). Observe that, by a standard density argument, inequality (\ref{sob}) holds for all $\eta\in W_0^{1,q}(M)$. Hence we can choose $\eta=\varphi_{r_1,r_2}$ for some $0<r_1<r_2$ obtaining
\begin{equation}\label{sob_vol}
V(B_{r_1})^{q/p}\leq \frac{S_M^q}{\left(\int_{r_1}^{r_2}a_q(s)ds\right)^{q-1}}.
\end{equation}
In particular, letting $r_2\to\infty$ gives $a_q\in L^1(0,+\infty)$, since otherwise $V(B_{r_1})\equiv 0$ for all $r_1>0$. Even if this conclusion is immediately implied by the ($q$-)hyperbolicity of $M$, we can combine inequality (\ref{sob_vol}) with Carron's estimate (\ref{carron})  to obtain a slightly improved result.
\begin{proposition}\label{prop_la}
Let $M$ be a $m$-dimensional complete non-compact manifold supporting the euclidean Sobolev inequality \eqref{sob} for some $q<p$ and $p=mq/(m-q)$. Then there exists a positive constant
$0<C_S=\gamma^{-({m-q})/({m(q-1)})}S_M^{q/({q-1})}$ such that
\begin{equation}\label{lower_area}
r^{(m-q)/({q-1})}\int_r^{\infty}a_q(s)ds \leq C_S \qquad \text{for all } r>0.
\end{equation}
\end{proposition}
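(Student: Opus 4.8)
The plan is to derive \eqref{lower_area} by letting $r_2\to\infty$ in the Sobolev--capacity estimate \eqref{sob_vol} and then inserting Carron's volume lower bound \eqref{carron}; essentially all that remains is bookkeeping with the exponents. First I would observe that, as noted just before the statement, $a_q\in L^1(0,+\infty)$ (otherwise $V(B_{r_1})\equiv 0$), so that $\int_{r_1}^{r_2}a_q(s)\,ds$ converges to $\int_{r_1}^{\infty}a_q(s)\,ds$ as $r_2\to\infty$. Since the left-hand side of \eqref{sob_vol} does not depend on $r_2$, passing to this limit (with $r_1=r$) is legitimate and yields
\[
V(B_r)^{q/p}\leq S_M^q\left(\int_r^{\infty}a_q(s)\,ds\right)^{1-q}.
\]

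Next I would solve this for the integral. Writing $J=\int_r^{\infty}a_q(s)\,ds>0$, the inequality reads $V(B_r)^{q/p}\leq S_M^q\, J^{-(q-1)}$, whence $J^{q-1}\leq S_M^q\, V(B_r)^{-q/p}$ and therefore, taking the $(q-1)$-th root (using $q>1$),
\[
\int_r^{\infty}a_q(s)\,ds\leq \frac{S_M^{q/(q-1)}}{V(B_r)^{q/(p(q-1))}}.
\]
At this point Carron's estimate $V(B_r)\geq\gamma r^m$ replaces the volume in the denominator, giving
\[
\int_r^{\infty}a_q(s)\,ds\leq \gamma^{-q/(p(q-1))}\,S_M^{q/(q-1)}\,r^{-mq/(p(q-1))}.
\]

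Finally I would carry out the exponent computation using $p=mq/(m-q)$, i.e.\ $p(q-1)=mq(q-1)/(m-q)$. This gives $mq/(p(q-1))=(m-q)/(q-1)$ for the power of $r$ and $q/(p(q-1))=(m-q)/(m(q-1))$ for the power of $\gamma$, so that multiplying both sides by $r^{(m-q)/(q-1)}$ produces exactly
\[
r^{(m-q)/(q-1)}\int_r^{\infty}a_q(s)\,ds\leq \gamma^{-(m-q)/(m(q-1))}S_M^{q/(q-1)}=C_S,
\]
which is \eqref{lower_area}, valid for all $r>0$ since both \eqref{sob_vol} and \eqref{carron} hold for all radii. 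There is no substantial obstacle here: the statement is a direct consequence of two already-established estimates, and the only points needing care are the justification of the limit $r_2\to\infty$ (handled by the integrability of $a_q$) and the routine but slightly delicate simplification of the exponents, where one must keep track that $1<q<p$ so that all the powers carry the expected signs.
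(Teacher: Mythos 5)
Your proposal is correct and follows exactly the route the paper intends: the paper states Proposition \ref{prop_la} as a direct combination of \eqref{sob_vol} (with $r_2\to\infty$, justified by $a_q\in L^1(0,+\infty)$) and Carron's bound \eqref{carron}, and your exponent bookkeeping, using $q/p=(m-q)/m$, reproduces the stated constant $C_S$ precisely.
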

\begin{remark}\rm{
We underline that Proposition \ref{prop_la} is non-trivial, in the sense that, in the absence of the validity of (\ref{sob}), there exist manifolds satisfying \eqref{carron} and $a_q\in L^1(0,+\infty)$, for which \eqref{lower_area} does not hold. For instance, for fixed $1<q<(m+1)/{2}$, an example is given by the $m$-dimensional model manifold $(t,\theta)\in M=(0,+\infty)\times \mathbb S^{m-1}$ endowed with the Riemannian metric $\left\langle , \right\rangle_M = dt^2+h^2(t)d\theta^2$, with warping function $h\in C^{\infty}((0,+\infty))$ chosen such that
\[
\begin{cases}
h(0)=0,\qquad h'(0^+)=1,&\\
h(t)\geq t^{\beta},&\\
\left.h(t)\right|_{\left[4k+3,4k+4\right]}\equiv t^{\beta}& \text{for } k=0,1,2,\dots,\\
\left.h(t)\right|_{\left[4k+1,4k+2\right]}\equiv Ht& \text{for }k=0,1,2,\dots
\end{cases}
\]
for some constants
\[
\frac{q-1}{m-1}<\beta<\frac{m-q}{m-1}<1\qquad\textrm{and}\qquad H>\frac 1 4 \left(\frac{m10^m\gamma}{A(\partial \mathbb B_1^{\mathbb R^m})}\right)^{1/({m-1})}.
\]}
\end{remark}

\end{document}